\newtheorem{thm}{Theorem}
\newtheorem{lem}{Lemma}
\newtheorem{res}{Result}
\newtheorem{prop}{Proposition}
\newtheorem{cor}{Corollary}
\theoremstyle{definition}
\newtheorem{defn}{Definition}
\newtheorem{rem}{Remark}
\newtheorem{exmp}{Example}
\newcommand{\ZZ}{\mathbb{Z}}
\newcommand{\CC}{\mathbb{C}}
\newcommand{\RR}{\mathbb{R}}
\newcommand{\TT}{\mathbb{T}}
\newcommand{\dmat}{\mathcal{M}}
\newcommand{\grp}{\mathcal{G}}
\newcommand{\tor}{\partial\mathbb{D}}
\newcommand{\ddet}{\mathcal{Q}}
\newcommand{\z}{\mathbf{z}}
\newcommand{\K}{\CC(\z)}
\newcommand{\Kr}{\RR(\z)}
\newcommand{\Laur}{\CC[\z^{\pm1}]}
\begin{document}
\begin{frontmatter}
\title{Multivariate Tight Wavelet Frames with Few Generators and High Vanishing Moments} 
\author[yon]{Youngmi~Hur\fnref{fn1}}
\ead{yhur@yonsei.ac.kr}
\author[jhu]{Zachary Lubberts\fnref{fn2}}
\ead{zlubber1@jhu.edu}
\author[cp]{Kasso~A.~Okoudjou\fnref{fn3}}
\ead{kasso@math.umd.edu}
\fntext[fn1]{This research was supported in part by the National Research Foundation of Korea (NRF) [Grant Number 20151009350].}%
\fntext[fn2]{Corresponding Author.}
\fntext[fn3]{This research was partially supported by a grant from the Simons Foundation $\# 319197$, the U. S. Army Research Office grant W911NF1610008, the U. S. National Foundation (NSF) grant 1814253, and an MLK  visiting professorship at MIT.}%

\address[yon]{Department of Mathematics, Yonsei University, Seoul 03722, Korea.}%
\address[jhu]{Department of Applied Mathematics and Statistics, 100 Whitehead Hall, Johns Hopkins University, 3400 N Charles St, Baltimore, MD 21218, USA.}%
\address[cp]{Department of Mathematics, University of Maryland College Park, College Park, MD 20742, USA.}%

\begin{abstract}
Tight wavelet frames are computationally and theoretically attractive, but most existing multivariate constructions have various drawbacks, including low vanishing moments for the wavelets, or a large number of wavelet masks. We further develop existing work combining sums of squares representations with tight wavelet frame construction, and present a new and general method for constructing such frames. Focusing on the case of box splines, we also demonstrate how the flexibility of our approach can lead to tight wavelet frames with high numbers of vanishing moments for all of the wavelet masks, while still having few highpass masks: in fact, we match the best known upper bound on the number of highpass masks for general box spline tight wavelet frame constructions, while typically achieving much better vanishing moments for all of the wavelet masks, proving a nontrivial lower bound on this quantity.
\end{abstract}

\begin{keyword}
multivariate tight wavelet frames, sums of squares representations, oblique extension principle
\MSC[2010] 11E25 \sep 42C40 \sep 42C15
\end{keyword}
\end{frontmatter}

\section{Introduction}
\label{S:intro}

Recent work in tight wavelet frame construction \cite{AlgPersI, LaiStock} has shown how sos representations for certain nonnegative trigonometric polynomials may be used to create highpass masks generating a tight wavelet frame for $L^{2}(\RR^{n})$. This construction makes use of the unitary extension principle (UEP) conditions on a collection of trigonometric polynomials, which are sufficient for the highpass masks to generate a tight wavelet frame \cite{DHRS,HanOrtho}. In this setting, we call trigonometric polynomials masks, which are lowpass when equal to one at $\omega=0$, and are highpass or wavelet when they are equal to zero there. Considering the case of dyadic dilation for now, when we are given a lowpass mask $\tau$ and a collection of highpass masks $\{q_{\ell}\}_{\ell=1}^{r}$, we say that these satisfy the UEP conditions when $$\tau(\omega)\overline{\tau(\omega+\gamma)}+\sum_{\ell=1}^{r}q_{\ell}(\omega)\overline{q_{\ell}(\omega+\gamma)}=\begin{cases}1&\text{ if }\gamma=0,\text{ and}\\0&\text{ if }\gamma\in\{0,\pi\}^{n}\setminus\{0\},\end{cases}$$ for all $\omega\in\TT^{n}:=[-\pi,\pi]^{n}$. These conditions necessitate that $f(\tau;\omega)=1-\sum_{\gamma\in\{0,\pi\}^{n}}|\tau(\omega+\gamma)|^{2}\geq0$ for all $\omega\in\TT^{n}$, which is called the sub-QMF condition, and when equality holds for all $\omega\in\TT^{n},$ then $\tau$ is said to satisfy the QMF condition. 

In fact, the UEP conditions imply that $f(\tau;\cdot)$ is a sum of squares, and the work in \cite{AlgPersI,LaiStock} shows the converse. When $f(\tau;\cdot)$ has a sum of hermitian squares representation $\sum_{j=1}^{J}|g_{j}(2\omega)|^{2}$ with trigonometric polynomials $g_{j},1\leq j\leq J$, they construct highpass masks satisfying the UEP conditions with $\tau$. In the special case of dyadic dilation, their construction proceeds as follows: Construct the column vectors $H(\omega)=[\tau(\omega+\gamma)]_{\gamma\in\{0,\pi\}^{n}}$ and $G(\omega)=[g_{j}(\omega)]_{j=1}^{J}$, and the Fourier transform matrix $X(\omega)=2^{-n/2}[e^{i(\omega+\gamma)\cdot\nu}]_{\gamma\in\{0,\pi\}^{n},\nu\in\{0,1\}^{n}}$. Then using block matrix notation, we have $$[H(\omega)\;H(\omega)G(2\omega)^{*}\;(I-H(\omega)H(\omega)^{*})X(\omega)]\left[\begin{array}{c}H(\omega)^{*}\\ G(2\omega)H(\omega)^{*}\\ X(\omega)^{*}(I-H(\omega)H(\omega)^{*})\end{array}\right]=I,$$ which are just another way of writing the UEP conditions, with the highpass masks $q_{1,j}(\omega)=\overline{g_{j}(2\omega)}\tau(\omega)$, $1\leq j\leq J$, $q_{2,\nu}(\omega)=2^{-n/2}(e^{i\omega\cdot\nu}-\tau(\omega)\sum_{\gamma\in\{0,\pi\}^{n}}\overline{\tau(\omega+\gamma)}e^{i(\omega+\gamma)\cdot\nu}),$ $\nu\in\{0,1\}^{n}$, which we can read off from the first row of the left-hand matrix. This means that the wavelet system generated by $(\tau,\{q_{1,j}\},\{q_{2,\nu}\})$ is a tight wavelet frame.

Rewriting this matrix product and using the relationship $G(2\omega)^{*}G(2\omega)=1-H(\omega)^{*}H(\omega)$, we see that this could be written as $$[H(\omega)\;(I-H(\omega)H(\omega)^{*})X(\omega)]\left[\begin{array}{cc}1+(1-H(\omega)^{*}H(\omega))&0\\ 0&I\end{array}\right]\left[\begin{array}{c}H(\omega)^{*}\\ X(\omega)^{*}(I-H(\omega)H(\omega)^{*})\end{array}\right]=I.$$ In \cite{SLP}, this is interpreted as a scaling of the Laplacian pyramid matrix $[H(\omega)\;(I-H(\omega)H(\omega)^{*})X(\omega)],$ and it was shown that this scaling matrix is the unique diagonal matrix which makes this product equal to the identity. There, the scaling matrix was factored under the assumption that $2-H(\omega)^{*}H(\omega)=|g(2\omega)|^{2}$ for some trigonometric polynomial $g$, giving rise to a modified lowpass mask $g(2\cdot)\tau$, but the construction in \cite{AlgPersI} instead assumes that $2-H(\omega)^{*}H(\omega)$ factorizes as $1+\sum_{j=1}^{J}|g_{j}(2\omega)|^{2}$. Combining these ideas, if there is a sum of squares representation for $2-H(\omega)^{*}H(\omega)$ as $\sum_{j=0}^{J}|g_{j}(2\omega)|^{2}$ where $g_{0}(0)=1$, then modifying the lowpass mask to be $g_{0}(2\cdot)\tau(\cdot)$ and constructing the highpass masks $q_{1,j}$ as above leads to a tight wavelet frame. Moreover, these constructions which modify the lowpass mask do not require the original lowpass mask to satisfy the sub-QMF condition, but after modifying, the new lowpass mask will satisfy this condition. We give more details about such constructions in a more general context in Section~\ref{S:slp}.

One downside to both of these constructions is that they rely on the UEP, which may result in highpass masks having suboptimal vanishing moments. A highpass mask's number of vanishing moments is the order of its root at $\omega=0$, and is related to approximation rates for the corresponding wavelet system \cite{DHRS}. A method for correcting this introduces a vanishing moment recovery (vmr) function, and uses highpass masks $\{q_{\ell}\}_{\ell=1}^{r}$ satisfying the oblique extension principle (OEP) conditions with the lowpass mask $\tau$ and vmr function $S$ to generate a tight wavelet frame \cite{DHRS}: $$S(2\omega)\tau(\omega)\overline{\tau(\omega+\gamma)}+\sum_{\ell=1}^{r}q_{\ell}(\omega)\overline{q_{\ell}(\omega+\gamma)}=\begin{cases}S(\omega)&\text{ if }\gamma=0,\text{ and }\\0&\text{ if }\gamma\in\{0,\pi\}^{n}\setminus\{0\},\end{cases}$$ where we continue to assume the setting of dyadic dilation for the time being. In \cite{LaiStock}, a strong condition on the vmr function and lowpass mask was found which results in tight wavelet frames with maximum vanishing moments, and may be viewed as an ``oblique QMF condition'' (though this terminology is not used there). In this paper, we consider a weaker version of this condition, which we call the ``oblique sub-QMF condition'' on the vmr function and lowpass mask: $$f(S,\tau;\omega)=\frac{1}{S(2\omega)}-\sum_{\gamma\in\{0,\pi\}^{n}}\frac{|\tau(\omega+\gamma)|^{2}}{S(\omega+\gamma)}\geq0\text{ for all }\omega\in\TT^{n}.$$ As before, when equality holds for all $\omega\in\TT^{n}$, this is the oblique QMF condition. This analogy extends even further, however: 
\begin{quote}
In Theorem~\ref{th:osqmf}, for a lowpass mask $\tau$ and rational trigonometric polynomial vmr function $S$, we establish the equivalence between the oblique sub-QMF condition and the existence of rational trigonometric polynomial highpass masks satisfying the OEP conditions with this $S$ and $\tau$. 
\end{quote}
Moreover, this structure allows us to show that the given masks generate a tight wavelet frame only using some mild conditions on the vanishing moment recovery function $S$:
\begin{quote}
In Theorem~\ref{th:osqmftwf}, we show that under the additional assumptions that $S$ is continuous at $0$ with $S(0)=1$, and that $S$ and $1/S$ belong to $L^{\infty}(\TT^{n})$, the wavelet system generated by the rational trigonometric polynomial highpass masks in Theorem~\ref{th:osqmf} is a tight wavelet frame.
\end{quote}

There are two primary motivations for considering the oblique sub-QMF condition rather than the oblique QMF condition: First, while it is possible to use the Fourier transform of the autocorrelation function for $\phi$ to obtain a vanishing moment recovery function which satisfies the oblique QMF condition with $\tau$ \cite{LaiStock}, this may be quite difficult to compute, especially as the dimension increases; Second, even if we are able to find this $S$, this choice may lead to a very high number of highpass masks in the constructed tight wavelet frame, since we have $2^{n}\times K$ highpass masks, where $n$ is the spatial dimension and $K$ is the number of sos generators for $1/S$. This motivates us to consider changing $S$ to have a simple form, which still gives us high vanishing moments for the constructed highpass masks, but may be written as a sum of few squares. We revisit this in Section~\ref{S:boxspl}, where we show that for box splines, it is possible to construct $S$ with $K=1$ that gives a collection of highpass masks all having at least $m$ vanishing moments, where $m$ is at least the accuracy number of any separable factor of the lowpass mask. This can lead to a dramatic reduction in the number of highpass masks for the constructed wavelet system, while only giving up a small fraction of the vanishing moments: see Examples~\ref{ex:manymasks} and \ref{ex:fewmasks}.

Moreover, if we allow rational trigonometric polynomial highpass masks, we are able to eliminate the assumption about the existence of a sum of squares representation for $f(S,\tau;\cdot)$, because we have observed that nonnegative rational trigonometric polynomials have representations as sums of squares of rational trigonometric polynomials in Corollary~\ref{c:trigsors}. This stands in contrast to the UEP constructions considered above, since in \cite[Theorem 2.5]{AlgPersI}, they discuss a lowpass mask $\tau$ for which there is no sos representation for $f(\tau;\cdot),$ meaning that the construction there fails. One way of interpreting this new observation is that when $f(\tau;\cdot)$ is nonnegative, but fails to have an sos representation, we are still able to show that there is a tight wavelet frame based in the multiresolution analysis generated by the refinable function $\phi$ associated with $\tau$.

In Section~\ref{S:slp}, we show that we may once again interpret these results in terms of scaling an ``oblique Laplacian pyramid'' matrix, clearly demonstrating how different assumptions on the factorization of $S(2\omega)+S(2\omega)^{2}f(S,\tau;\omega)$ lead to constructions with just a modified lowpass mask, as in \cite{SLP}; the original lowpass mask, and a collection of highpass masks corresponding to these sums of squares generators, as in \cite{AlgPersI,LaiStock}; or a combination of the two, as discussed above. Moreover, this process of modifying the lowpass mask turns lowpass masks which do not satisfy the oblique sub-QMF condition with the given $S$ into new lowpass masks which do satisfy this condition.\\

The rest of the paper is organized as follows:

In Section~\ref{S:prelim}, we review sums of squares representations and some related literature, as well as some basic facts about tight wavelet frames, and the oblique extension principle. In Section~\ref{S:oep}, we prove Theorems~\ref{th:osqmf} and \ref{th:osqmftwf}. In Section~\ref{S:slp}, we discuss the scaling matrix interpretation of our OEP construction, and show how different factorizations of the scaling matrix lead to different tight wavelet frame constructions, as we did in the introduction for the UEP setting. In Section~\ref{S:boxspl}, we apply the constructions of Section~\ref{S:oep} in the case that the lowpass mask corresponds to a box spline refinable function, obtaining tight wavelet frames with near-maximum vanishing moments for any such lowpass mask, provided that certain univariate trigonometric polynomials may be constructed. We give clear examples of how letting go of a single vanishing moment can significantly decrease the number of highpass masks: see Example~\ref{ex:fewmasks}; and of how choosing a simple form for $S$ can allow us to generalize constructions to arbitrary dimensions easily, while guaranteeing a certain minimum number of vanishing moments for all the wavelets: see Example~\ref{ex:fewmasksextend}.

\section{Preliminaries}
\label{S:prelim}

We denote by $\CC[\z]=\CC[z_{1},\ldots,z_{n}]$ the ring of polynomials in the variables $z_{1},\ldots,z_{n}$ with coefficients in $\CC,$ and similarly for $\RR[\z]$. We denote by $\K=\CC(z_{1},\ldots,z_{n}),$ or the field of fractions of $\CC[\z],$ which is the space of rational polynomials $f=p/q$ where $p,q\in\CC[\z]$ and $q\neq0$, and similarly for $\Kr$. We will also refer to Laurent polynomials, which are the elements of $\K$ of the form $z_{1}^{k_{1}}\cdots z_{n}^{k_{n}}p,$ where $p\in\CC[\z]$ and $k_{1},\ldots,k_{n}\in\mathbb{Z}$. We will denote the set of Laurent polynomials as $\Laur$, or $\RR[\z^{\pm 1}]$ when the coefficient field is $\RR$.

\begin{defn}
We say that a polynomial or Laurent polynomial $f\in\Laur$ has a \emph{sum of hermitian squares representation (or sos representation) on $\Omega\subseteq\CC^{n}$ with $\{g_{j}\}_{j=1}^{J}\subset\CC[\z],$ $J<+\infty$,} when \begin{equation}\label{eq:sosdef}f(z)=\sum_{j=1}^{J}|g_{j}(z)|^{2}\quad\forall z\in\Omega.\end{equation} We may abbreviate this when the set $\Omega$ is clear (we will typically consider $\RR^{n}$ or $(\tor)^{n}$), or just say that $f$ has an sos representation or is an sos on $\Omega$ if there is some finite collection $\{g_{j}\}_{j=1}^{J}\subset\CC[\z]$ for which this equation holds for all $z\in\Omega$.

Similarly, we say that $f\in\K$ has a \emph{sum of rational hermitian squares representation (or sors representation) on $\Omega\subseteq\CC^{n}$ with $\{g_{j}\}_{j=1}^{J}\subset\K,$} if Equation~(\ref{eq:sosdef}) holds for all $z\in\Omega$ at which $f(z)$ is defined.

Using the natural identification $z_{j}=e^{i\omega_{j}}$, we may also apply these definitions in the case that $f$ and the $g_{j}$ are trigonometric polynomials, or rational trigonometric polynomials, where in this case, the natural domain to consider will be $\Omega=\TT^{n}=[-\pi,\pi]^{n}$. \hfill$\square$
\end{defn}

We now collect several related results from the literature, many of which we will be using in the sequel. This first list of results concern trigonometric polynomials.

\begin{res}
\label{r:soslems}
\begin{enumerate}[(a)]
\item (Fej\'{e}r-Riesz \cite{Daub}) Let $f\in\CC[z_1^{\pm 1}]$ be such that $f(z)\geq0$ for all $z\in\tor$. Then $f(z)=|p(z)|^{2}$ for all $z\in\tor,$ where $p\in \CC[z_1]$.
\item (Scheiderer \cite{ScheidRAS}) Let $f\in \CC[z_{1}^{\pm1},z_{2}^{\pm1}]$ be such that $f(z)\geq0$ for all $z\in(\tor)^{2}$. Then $f$ has an sos representation on $(\tor)^{2}$.
\item (Charina et al.\cite{AlgPersI}) Let $n\geq 3$. There is a nonnegative trigonometric polynomial in $n$ variables with no sos representation.
\item (Dritschel \cite{Dritsch}) Let $n\geq 2$, and $f\in\CC[\z^{\pm1}]$ be such that $f(z)>0$ for all $z\in(\tor)^{n}$. Then $f$ has an sos representation on $(\tor)^{n}$.
\end{enumerate}
\end{res}
\begin{proof}
(a) is proved for a special case in \cite{Daub}. (b) may be found as Corollary 3.4 in \cite{ScheidRAS} (see also \cite[Theorem 2.4]{AlgPersI}). (c) is shown in \cite{AlgPersI}. (d) comes from \cite{Dritsch}, but the statement here comes from \cite{GerLai}.
\end{proof}

Moving away from the trigonometric polynomial case to that of ordinary polynomials with real coefficients, we have the following results, the first of which comes from Artin in 1927 \cite{Art} (see also \cite{Beno}), but translated and adapted to our notation. The second comes from Pfister in 1967 \cite{Pfister} (see also \cite{Beno}), and gives a bound on the number of squares in the representation guaranteed by Artin's Theorem. We will use these theorems and the following corollary in the proof of our main theorem.

\begin{res}
\label{r:soslemsrx}
\begin{enumerate}[(a)]
\item (Artin \cite{Art}) Let $f\in\RR[\z]$ be such that $f(z)\geq0$ for all $z\in\RR^{n}$. Then $f$ is an sors on $\RR^{n}$ of functions in $\RR(\z)$, i.e., there exists $\{g_{j}\}_{j=1}^{J}\subset \RR(\z), J<+\infty$ such that for all $z\in\RR^{n}$, $f(z)=\sum_{j=1}^{J} g_{j}(z)^{2}.$
\item (Pfister \cite{Pfister}) Let $f\in \RR[\z]$ be such that $f(z)\geq0$ for all $z\in\RR^{n}$. Then $f$ is an sors on $\RR^{n}$ of at most $2^n$ functions in $\RR(\z)$, i.e., there exist $\{g_{j}\}_{j=1}^{J}$ satisfying the conclusion of part (a) with $J\leq 2^{n}$.
\item Let $f\in\RR(\z)$, $f(z)\geq0$ for all $z\in\RR^{n}$ at which it is defined. Then $f$ is an sors on $\RR^{n}$ of at most $2^{n}$ functions in $\RR(\z)$.
\end{enumerate}
\end{res}

In fact, this last result also holds when the function is a trigonometric polynomial in $n$ variables:
\begin{res}
\label{c:trigsors}
Let $f$ be a nonnegative rational trigonometric polynomial in $n$ variables. There exist at most $2^{n}$ rational trigonometric polynomials $g_{j}$ such that $$f(\omega)=\sum_{j=1}^{J}|g_{j}(\omega)|^{2}\quad\text{ for all }\omega\in\TT^{n}.$$
\end{res}

While it appears that this result has not appeared in the wavelet literature, it may be obtained by combining Result~\ref{r:soslemsrx}(c) with the map $\omega_{j}=2\arctan(x_{j}), 1\leq j\leq n$, which induces $$\cos(\omega_{j})=\frac{1-x_{j}^{2}}{1+x_{j}^{2}},\quad \sin(\omega_{j})=\frac{2x_{j}}{1+x_{j}^{2}}.$$ A version of this map appeared first in \cite{Rudin}, and versions have also appeared in \cite{Dritsch,Parrilo}. The angle addition formulas for sine and cosine ensure that $f$ may be written as a rational polynomial in the variables $\{\cos(\omega_{j}),\sin(\omega_{j})\}_{j=1}^{n}$, and since $f$ is real-valued, this must have real coefficients. Applying this map then results in a multivariable polynomial with real coefficients which is nonnegative on $\RR^{n}$, and this has a rational sum of squares representation by Result~\ref{r:soslemsrx}(c). Inverting this map via $x_{j}=\tan(\omega_{j}/2)$ gives a sum of squares representation for $f$ with rational trigonometric polynomial generators.

Below, we show how this argument works in a particular example.

\begin{exmp}
\label{ex:trigsors}
Consider the trigonometric polynomial $f(\omega)=1-\cos^{2}(\omega_{1})\cos^{2}(\omega_{2}),$ which is nonnegative for all $\omega_{1},\omega_{2}\in\mathbb{R}$. Applying the transformation $\omega_{j}=2\mathrm{arctan}(x_{j})$ to obtain $M(f)$, we get $$M(f)(x)=1-\left(\frac{1-x_{1}^{2}}{1+x_{1}^{2}}\cdot \frac{1-x_{2}^{2}}{1+x_{2}^{2}}\right)^{2}.$$
By Result~\ref{r:soslemsrx}(c), this has an sors representation on $\RR^{2},$ and indeed, $$M(f)(x)=\left(\frac{2x_{1}}{1+x_{1}^{2}}\right)^{2}+\left(\frac{1-x_{1}^{2}}{1+x_{1}^{2}}\right)^{2}\left(\frac{2x_{2}}{1+x_{2}^{2}}\right)^{2}.$$
Inverting the transformation on the variables, we get $$f(\omega)=\sin^{2}(\omega_{1})+\cos^{2}(\omega_{1})\sin^{2}(\omega_{2}).$$\hfill$\square$
\end{exmp}

\subsection{Tight Wavelet Frames}
\label{s:twfs}

Let $\dmat\in M_{n}(\mathbb{Z})$ be a dilation matrix, so that the set of eigenvalues of $\dmat$ lies in $\{z\in\mathbb{C}:|z|>1\}$. Let $\ddet=|\mathrm{det}(\dmat)|$. Let $\Gamma$ be a complete set of distinct coset representatives of $\ZZ^{n}/\dmat\ZZ^{n}$ containing 0, and let $\Gamma^{*}$ be a complete set of distinct coset representatives of $(2\pi\dmat^{-T}\ZZ^{n})/(2\pi\ZZ^{n})$ containing 0. In the introduction, we considered only the case of $\dmat=2I$, and chose $\Gamma=\{0,1\}^{n}$, $\Gamma^{*}=\{0,\pi\}^{n}$, but we now consider the general case.

\begin{defn}
Given a lowpass mask $\tau,$ which is a trigonometric polynomial satisfying $\tau(0)=1,$ the \emph{refinable function} $\phi$ \emph{associated with }$\tau$ satisfies $\hat{\phi}(\dmat^{T}\omega)=\tau(\omega)\hat{\phi}(\omega)$ for all $\omega\in\RR^{n}$. It may be defined by its Fourier transform as $\hat{\phi}(\omega)=\prod_{j=1}^{\infty}\tau((\dmat^{-T})^{j}\omega)$ for all $\omega\in\mathbb{R}^{n}$. Given a collection of highpass masks $q_{\ell},\,1\leq \ell\leq r$ which are rational trigonometric polynomials satisfying $q_{\ell}(0)=0,$ we define $(\tau,q_{1},\ldots,q_{r})$ as the \emph{combined MRA mask}, and \emph{the wavelet system defined by }$(\tau,q_{1},\ldots,q_{r})$ is then the set $\{\psi_{j,k}^{(\ell)}:j\in\mathbb{Z},k\in\mathbb{Z}^{n},1\leq\ell\leq r\}$, where $\psi_{j,k}^{(\ell)}=\ddet^{j/2}\psi^{(\ell)}(\dmat^j \cdot-k),$ and $\psi^{(\ell)}$ is defined by its Fourier transform as $\hat{\psi}^{(\ell)}(\dmat^{T}\omega)=q_{\ell}(\omega)\hat{\phi}(\omega)$ for all $\omega\in\RR^{n}$.\hfill$\square$
\end{defn}

When a wavelet system has the property that $\sum_{j,k,\ell}|\langle f,\psi_{j,k}^{(\ell)}\rangle|^{2}=\|f\|_{L^{2}(\RR^{n})}^{2}$ for all $f\in L^{2}(\RR^{n}),$ we say that it is a \emph{tight wavelet frame (TWF)}, and in this case $f=\sum_{j,k,\ell}\langle f,\psi_{j,k}^{(\ell)}\rangle\psi_{j,k}^{(\ell)}$. This expansion is similar to the one given by an orthonormal basis for $L^{2}(\RR^{n}),$ but the wavelets $\psi_{j,k}^{(\ell)}$ may not be orthogonal in the case that they only form a TWF. When in addition $\{\psi_{j,k}^{(\ell)}\}$ is an orthonormal set, we say that the wavelet system is an \emph{orthonormal wavelet basis}. For more details on these ideas, see \cite{Daub}.

The statement of the OEP is given below, from \cite{DHRS}. This uses the notation $\sigma(\phi):=\{\omega\in\TT^{n}:\hat{\phi}(\omega+2\pi k)\neq0,\text{ for some }k\in\ZZ^{n}\},$ as well as $\delta:\Gamma^{*}\rightarrow\{0,1\},$ which always takes the value zero except for $\delta(0)=1$. We have adapted the notation of this theorem to our setting.

\begin{res}
\label{res:oep}
Let $\tau,q_{1},\ldots,q_{r}$ be $2\pi$-periodic functions. Suppose that
\begin{enumerate}[(a)]
\item Each function $\tau,q_{j}$ belongs to $L^{\infty}(\TT^{n})$.
\item The refinable function $\phi$ satisfies $\lim_{\omega\rightarrow0}\hat{\phi}(\omega)=1.$
\item The function $[\hat{\phi},\hat{\phi}]:=\sum_{k\in\ZZ^{n}}|\hat{\phi}(\cdot+2\pi k)|^{2}$ belongs to $L^{\infty}(\TT^{n})$.
\end{enumerate}
Suppose there exists a $2\pi$-periodic function $S$ that satisfies the following:
\begin{enumerate}[(i)]
\item $S\in L^{\infty}(\TT^{n})$ is nonnegative, continuous at the origin, and $S(0)=1$.
\item If $\omega\in\sigma(\phi)$, and if $\gamma\in\Gamma^{*}$ is such that $\omega+\gamma\in\sigma(\phi),$ then $$S(\dmat^{T}\omega)\tau(\omega)\overline{\tau(\omega+\gamma)}+\sum_{j=1}^{r}q_{j}(\omega)\overline{q_{j}(\omega+\gamma)}=S(\omega)\delta(\gamma).$$
\end{enumerate}
Then the wavelet system defined by $(\tau,q_{1},\ldots,q_{r})$ is a tight wavelet frame for $L^{2}(\RR^{n})$.
\end{res}

In what follows, we will always be considering $\tau$ as a trigonometric polynomial lowpass mask, which corresponds to a compactly supported refinable function. In this case, $\sigma(\phi)=\TT^{n}$ (see \cite{DHRS}). We also note that when $\phi$ satisfies condition (c), it necessarily belongs to $L^{2}(\RR^{n})$. The function $S$ is called the \emph{vanishing moment recovery (vmr) function}, since the additional flexibility it brings may be used to construct highpass masks with better vanishing moments than in the unitary extension principle setting, which forces $S\equiv 1$.

\section{OEP Tight Wavelet Frames from Sors Representations}
\label{S:oep}

Given a trigonometric polynomial lowpass mask $\tau$ and rational trigonometric polynomial vmr function $S$, we now give a condition on this pair which guarantees the existence of rational trigonometric polynomial highpass masks $q_{\ell}$ satisfying the OEP conditions (specifically (ii) above), and we show how to find these masks constructively. The condition we impose might be thought of as an oblique extension of the well-known sub-QMF condition, and indeed when $S\equiv 1,$ the ``oblique sub-QMF condition'' reduces to the sub-QMF condition, which is necessary for constructing a tight wavelet frame with the UEP \cite{AlgPersI}. Analogously, the OEP conditions will necessitate our oblique version. We begin by stating the main theorems, which are split into an algebraic part and an analytical part. The proofs of these theorems then follow, along with a few requisite definitions and lemmata.

\begin{thm}
\label{th:osqmf}
Let $S$ be a nonzero rational trigonometric polynomial which is nonnegative on $\TT^{n}$, and let $\tau$ be a trigonometric polynomial lowpass mask. The following are equivalent:
\begin{enumerate}[(A)]
\item The Oblique sub-QMF condition holds: \begin{equation}\label{eq:obsubqmf} \sum_{\gamma\in\Gamma^{*}}\frac{|\tau(\omega+\gamma)|^{2}}{S(\omega+\gamma)}\leq\frac{1}{S(\dmat^{T}\omega)}\quad\text{for all }\omega\in\mathbb{T}^{n}\text{ at which both sides are defined},\end{equation}
\item There exist rational trigonometric polynomials $\{q_{\ell}\}_{\ell=1}^{r}$ such that for all $\gamma\in\Gamma^{*}$ and $\omega\in\TT^{n}$ at which both sides are defined: \begin{equation}\label{eq:oepfull}S(\dmat^{T}\omega)\tau(\omega)\overline{\tau(\omega+\gamma)}+\sum_{\ell=1}^{r}q_{\ell}(\omega)\overline{q_{\ell}(\omega+\gamma)}=\begin{cases}S(\omega)&\text{if }\gamma=0\\0&\text{otherwise.}\end{cases}\end{equation}
\end{enumerate}
Moreover, provided that either (A) or (B) holds, there exist (a potentially different set of) rational trigonometric polynomials $\{q_{\ell}\}_{\ell=1}^{r}$ such that for all $\gamma\in\Gamma^{*}$ and $\omega\in\TT^{n}$ at which both sides are defined, Equation~(\ref{eq:oepfull}) holds, where $r\leq 2^{n}(1+\ddet)$.
\end{thm}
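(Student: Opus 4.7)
The plan is to recast both conditions as statements about a single Hermitian rational trigonometric polynomial matrix, using Result~\ref{c:trigsors} as the key factorization tool. Define $H(\omega) := [\tau(\omega+\gamma)]_{\gamma\in\Gamma^*}$, $D(\omega) := \operatorname{diag}(S(\omega+\gamma))_{\gamma\in\Gamma^*}$, and
\[ M(\omega) := D(\omega) - S(\dmat^{T}\omega)\,H(\omega)H(\omega)^*. \]
Since $\dmat^{T}\gamma \in 2\pi\ZZ^{n}$ for every $\gamma\in\Gamma^*$, we have $S(\dmat^{T}(\omega+\gamma)) = S(\dmat^{T}\omega)$, and a direct entry-by-entry check (shifting $\omega$ by each $\gamma'\in\Gamma^*$ and using $2\pi$-periodicity of $\tau$) shows that condition (B) is equivalent to the matrix identity $M(\omega) = Q(\omega)Q(\omega)^*$ with $Q_{\gamma,\ell}(\omega) := q_{\ell}(\omega+\gamma)$.

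For (B) $\Rightarrow$ (A): the factorization $M=QQ^{*}$ makes $M$ positive semidefinite wherever defined. Evaluating the quadratic form on $v(\omega)_{\gamma} := \tau(\omega+\gamma)/S(\omega+\gamma)$ yields $v^{*}Mv = A(1-S(\dmat^{T}\omega)A)$, where $A(\omega) := \sum_{\gamma}|\tau(\omega+\gamma)|^{2}/S(\omega+\gamma)$; since $A\geq 0$, nonnegativity forces $A\leq 1/S(\dmat^{T}\omega)$, which is exactly (A).

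For (A) $\Rightarrow$ (B) together with $r\leq 2^{n}(1+\ddet)$: by (A), $f(S,\tau;\cdot)$ is a nonnegative rational trigonometric polynomial, and it is $\Gamma^{*}$-invariant, so it may be viewed as a rational trigonometric polynomial in $\dmat^{T}\omega$. Result~\ref{c:trigsors} then supplies
\[ f(S,\tau;\omega) = \sum_{j=1}^{J}|g_{j}(\dmat^{T}\omega)|^{2},\qquad J\leq 2^{n}, \]
for rational trigonometric polynomials $g_{j}$. Setting $q_{1,j}(\omega) := S(\dmat^{T}\omega)\overline{g_{j}(\dmat^{T}\omega)}\tau(\omega)$, the periodicity $S\circ\dmat^{T}\circ(\cdot+\gamma)=S\circ\dmat^{T}$ gives $Q_{1}Q_{1}^{*} = S(\dmat^{T}\omega)^{2}f(S,\tau;\omega)\,HH^{*}$, and the residual
\[ R := M - Q_{1}Q_{1}^{*} = D - S(\dmat^{T}\omega)\bigl[1+S(\dmat^{T}\omega)f(S,\tau;\omega)\bigr]HH^{*} \]
is still positive semidefinite: the requisite bound $S(\dmat^{T}\omega)[1+S(\dmat^{T}\omega)f]A\leq 1$ reduces, with $x:=S(\dmat^{T}\omega)A\in[0,1]$, to $(1-x)^{2}\geq 0$. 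Retaining its modulation structure, $R$ is then factored as $R=Q_{2}Q_{2}^{*}$ by introducing the unitary Fourier-type matrix $X(\omega) := \ddet^{-1/2}[e^{i(\omega+\gamma)\cdot\nu}]_{\gamma\in\Gamma^*,\,\nu\in\Gamma}$. Every column of $Q_{2}$ must be the modulation vector of a rational trigonometric polynomial $q_{2,\ell}$, equivalently a vector $X\cdot [q_{2,\ell,\nu}(\dmat^{T}\omega)]_{\nu\in\Gamma}$; the factorization problem then reduces to decomposing the $\ddet$ polyphase-indexed scalar nonnegative rational trigonometric polynomials that arise, and Result~\ref{c:trigsors} supplies at most $2^{n}$ sors generators for each. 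This produces type-2 masks $q_{2,\nu,k}$ indexed by $\nu\in\Gamma$, $1\leq k\leq 2^{n}$, giving $r = J + \ddet\cdot 2^{n} \leq 2^{n}(1+\ddet)$.

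The main obstacle is the final factorization of $R$. A generic Cholesky-type decomposition produces columns that need not be modulation vectors of a single rational trigonometric polynomial; the shift-covariance $Q_{2,\gamma,\ell}(\omega)=q_{2,\ell}(\omega+\gamma)$ is a strong constraint confining the factorization to the $\ddet$-dimensional polyphase module. Clearing the rational coefficients coming from $S$ and $1/S$ while maintaining this structure is what forces the double use of Result~\ref{c:trigsors} (once on $f(S,\tau;\cdot)$, and once per polyphase slot inside the residual) and accounts for the extra factor of $2^{n}$ in the bound, relative to the UEP case $S\equiv 1$, in which the naive identity $(I-HH^{*})^{2}=Q_{2}Q_{2}^{*}$ already furnishes only $\ddet$ type-2 masks.
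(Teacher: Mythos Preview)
Your direction (B) $\Rightarrow$ (A) is fine and in fact slightly cleaner than the paper's: you test the positive semidefinite matrix $M$ on the vector $v=D^{-1}H$, whereas the paper takes the determinant of $M$ and uses the rank-one update formula. Both arrive at the same inequality.

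The gap is in (A) $\Rightarrow$ (B), specifically in the factorization of the residual $R=D-S(\dmat^{T}\omega)\bigl[1+S(\dmat^{T}\omega)f(S,\tau;\omega)\bigr]HH^{*}$. You assert that after conjugating by the Fourier matrix $X$, the problem ``reduces to decomposing the $\ddet$ polyphase-indexed scalar nonnegative rational trigonometric polynomials that arise,'' one application of Result~\ref{c:trigsors} per polyphase slot. That would be true only if $X(\omega)^{*}R(\omega)X(\omega)$ were diagonal; but $X^{*}DX$ is the convolution matrix built from the polyphase components of $S$, which is diagonal precisely when $S\equiv\text{const}$ (the UEP case). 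For general $S$ the residual in the polyphase domain is a genuinely full $\ddet\times\ddet$ Hermitian PSD matrix of rational trigonometric polynomials, and you have not supplied a matrix sum-of-rational-squares result to factor it, nor does one follow from the scalar Result~\ref{c:trigsors} with the column bound you need.

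The paper avoids this difficulty by never attempting to factor $R$ directly. It observes the algebraic identity
\[
R \;=\; \bigl(\Sigma-S(\dmat^{T}\omega)HH^{*}\bigr)\,\Sigma^{-1}\,\bigl(\Sigma-S(\dmat^{T}\omega)HH^{*}\bigr),
\]
which follows from $1+S(\dmat^{T}\omega)f(S,\tau;\omega)=2-S(\dmat^{T}\omega)H^{*}\Sigma^{-1}H$, and then factors the \emph{diagonal} matrix $\Sigma^{-1}$ instead. Lemma~\ref{l:siginvsors} does this by taking an sors $1/S=\sum_{k=1}^{K}|s_{k}|^{2}$ with $K\le 2^{n}$ (one scalar application of Result~\ref{c:trigsors}) and setting $A(\omega)=\ddet^{-1/2}\bigl[e^{i\nu\cdot(\omega+\gamma)}s_{k}(\omega+\gamma)\bigr]_{\gamma,(\nu,k)}$, so that $\Sigma^{-1}=AA^{*}$ and every column of $A$ is already a $\grp$-vector. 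Then $Q_{2}:=(\Sigma-S(\dmat^{T}\omega)HH^{*})A$ inherits the $\grp$-vector column structure and has $\ddet K\le 2^{n}\ddet$ columns, which is where the bound actually comes from. The ``once per polyphase slot'' heuristic happens to give the right count, but the mechanism is the single sors of $1/S$ tensored with $\Gamma$, not $\ddet$ independent scalar factorizations of diagonal entries of a transformed $R$.
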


The next theorem combines these conditions with an additional assumption on the rational trigonometric polynomial $S$, guaranteeing that these masks generate a TWF. This might be seen as an extension of \cite[Lemma 2.1]{HanOrtho}.

\begin{thm}
\label{th:osqmftwf}
Assume the setting of the previous theorem. Suppose, in addition to satisfying one of (A) or (B), that $S$ is continuous at $0$ with $S(0)=1,$ and that $S$ and $1/S$ belong to $L^{\infty}(\TT^{n})$. Then the wavelet system defined by the combined MRA mask $(\tau,q_{1},\ldots,q_{r})$ is a tight wavelet frame. 
\end{thm}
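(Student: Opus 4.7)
The plan is to verify the hypotheses of Result~\ref{res:oep}. Hypothesis (ii) is precisely condition (B) of Theorem~\ref{th:osqmf}, which holds by the equivalence with (A); hypothesis (i) is given verbatim by the additional assumptions of the present theorem. For (a), $\tau$ is a trigonometric polynomial so lies in $L^{\infty}(\TT^{n})$, and setting $\gamma = 0$ in Equation~(\ref{eq:oepfull}) gives
$$\sum_{\ell=1}^{r}|q_{\ell}(\omega)|^{2} \;=\; S(\omega) - S(\dmat^{T}\omega)|\tau(\omega)|^{2} \;\leq\; \|S\|_{L^{\infty}}$$
almost everywhere, so each $q_{\ell}\in L^{\infty}(\TT^{n})$. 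Hypothesis (b) is standard given that $\tau$ is smooth with $\tau(0)=1$: the product $\prod_{j\geq 1}\tau((\dmat^{-T})^{j}\omega)$ converges locally uniformly on $\RR^{n}$ to a continuous $\hat\phi$ with $\hat\phi(0)=1$. The substantive work is verifying hypothesis (c), that $[\hat\phi,\hat\phi] \in L^{\infty}(\TT^{n})$.

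To control $[\hat\phi,\hat\phi]$, I would introduce cascade approximants
$$\hat\phi^{(N)}(\omega) := \eta\bigl((\dmat^{-T})^{N}\omega\bigr)\prod_{j=1}^{N}\tau\bigl((\dmat^{-T})^{j}\omega\bigr), \qquad \eta(\omega) := \frac{\chi_{[-\pi,\pi]^{n}}(\omega)}{\sqrt{S(\omega)}}.$$
Continuity of $S$ at the origin with $S(0)=1$ makes $\eta$ continuous at $0$ with $\eta(0)=1$, so $\hat\phi^{(N)}(\omega) \to \hat\phi(\omega)$ pointwise. The refinement-type identity $\hat\phi^{(N)}(\omega) = \tau(\dmat^{-T}\omega)\hat\phi^{(N-1)}(\dmat^{-T}\omega)$, combined with $2\pi$-periodicity of $\tau$ and the coset decomposition $2\pi\dmat^{-T}\ZZ^{n} = \bigsqcup_{\gamma\in\Gamma^{*}}(\gamma + 2\pi\ZZ^{n})$, yields the cascade recursion
$$[\hat\phi^{(N)},\hat\phi^{(N)}](\omega) = \sum_{\gamma\in\Gamma^{*}}|\tau(\dmat^{-T}\omega+\gamma)|^{2}\,[\hat\phi^{(N-1)},\hat\phi^{(N-1)}](\dmat^{-T}\omega+\gamma).$$

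Setting $G_{N}(\omega) := S(\omega)\,[\hat\phi^{(N)},\hat\phi^{(N)}](\omega)$, the recursion becomes
$$G_{N}(\omega) = \sum_{\gamma\in\Gamma^{*}}\frac{S(\omega)\,|\tau(\dmat^{-T}\omega+\gamma)|^{2}}{S(\dmat^{-T}\omega+\gamma)}\,G_{N-1}(\dmat^{-T}\omega+\gamma),$$
and the oblique sub-QMF condition applied at $\dmat^{-T}\omega$ gives $\sum_{\gamma}|\tau(\dmat^{-T}\omega+\gamma)|^{2}/S(\dmat^{-T}\omega+\gamma)\leq 1/S(\omega)$, so $\|G_{N}\|_{L^{\infty}} \leq \|G_{N-1}\|_{L^{\infty}}$. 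The initialization $G_{0}\equiv 1$ is immediate from $2\pi$-periodicity of $S$ together with the fact that the integer translates of $[-\pi,\pi]^{n}$ a.e.\ tile $\RR^{n}$, giving $[\hat\phi^{(0)},\hat\phi^{(0)}](\omega) = 1/S(\omega)$. Thus $[\hat\phi^{(N)},\hat\phi^{(N)}](\omega)\leq 1/S(\omega)$ uniformly in $N$, and Fatou's lemma applied to the counting measure on $\ZZ^{n}$ together with the pointwise limit $\hat\phi^{(N)}\to\hat\phi$ yields $[\hat\phi,\hat\phi](\omega) \leq 1/S(\omega) \leq \|1/S\|_{L^{\infty}}$, establishing (c). I expect the main obstacle to be selecting the initial cutoff $\eta$ so that the normalization $G_{0}\equiv 1$ and the pointwise convergence $\hat\phi^{(N)}\to\hat\phi$ are simultaneously clean; once the approximants are set up, the oblique sub-QMF inequality does the real work almost mechanically, and the conclusion follows from Result~\ref{res:oep}.
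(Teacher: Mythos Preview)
Your proposal is correct and essentially identical to the paper's proof: the paper defines the same cascade approximants $f_{j}(\omega)=\chi_{(\dmat^{T})^{j}[-\pi,\pi)^{n}}(\omega)\,S((\dmat^{-T})^{j}\omega)^{-1/2}\prod_{\ell=1}^{j}\tau((\dmat^{-T})^{\ell}\omega)$ (your $\hat\phi^{(N)}$ with $\eta=\chi_{[-\pi,\pi)^{n}}/\sqrt{S}$), proves the same pointwise bound $[f_{j},f_{j}]\leq 1/S$ by induction using the oblique sub-QMF inequality, and finishes with Fatou. Your repackaging via $G_{N}=S\,[\hat\phi^{(N)},\hat\phi^{(N)}]$ and the sup-norm monotonicity $\|G_{N}\|_{\infty}\le\|G_{N-1}\|_{\infty}$ is a cosmetic variant of the same inductive step.
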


The assumptions that $S,1/S\in L^{\infty}(\TT^{n})$ guarantee that $S$ and $1/S$ are pole-free, so in this setting, Equations~(\ref{eq:obsubqmf}) and (\ref{eq:oepfull}) hold for all $\omega\in\TT^{n}$. Now, we turn to the proofs of these theorems.

We begin with some definitions and basic lemmas. Recall that $\dmat\in M_{n}(\ZZ)$ is a dilation matrix.

\begin{defn}
We define the group action of $\grp=(2\pi\dmat^{-T}\ZZ^{n})/(2\pi\ZZ^{n})$ on a rational trigonometric polynomial in the following way: for $\gamma\in\Gamma^{*}$ and $f$ a rational trigonometric polynomial, $\gamma:f \mapsto f^{\gamma}(\cdot)=f(\cdot+\gamma).$ Note that this group action is independent of the set $\Gamma^{*}$ of coset representatives for $\grp$.

We say that a rational trigonometric polynomial $f$ is $\grp$-invariant if for all $\gamma\in\Gamma^{*}$, $f^{\gamma}=f$.

We say that $H(\omega)$ is a $\grp$-vector for the rational trigonometric polynomial $h$ if $H(\omega)=[h^{\gamma}(\omega)]_{\gamma\in\Gamma^{*}}.$ We also call a vector a $\grp$-vector if it is of this form for some rational trigonometric polynomial.\hfill$\square$
\end{defn}

\begin{defn}
For $f$ a rational trigonometric polynomial, we define its polyphase components $f_{\nu}$, $\nu\in\Gamma$, by \begin{equation}\label{eq:poly}f_{\nu}(\dmat^{T}\omega)=\ddet^{-1/2}\sum_{\gamma\in\Gamma^{*}}f^{\gamma}(\omega)e^{-i(\omega+\gamma)\cdot\nu}.\end{equation} \hfill$\square$
\end{defn}
We will also use the following dual relation, which is easy to show using the definition above:
\begin{equation}\label{eq:dualpoly}f(\omega)=\ddet^{-1/2}\sum_{\nu\in\Gamma}f_{\nu}(\dmat^{T}\omega)e^{i\omega\cdot\nu}.\end{equation}

Note that if $f$ is a rational trigonometric polynomial, then $f(\dmat^{T}\cdot)$ is $\grp$-invariant, since $(f(\dmat^{T}\cdot))^{\gamma}(\omega)=f(\dmat^{T}(\omega+\gamma))=f(\dmat^{T}\omega),$ because $\dmat^{T}\gamma\in 2\pi\ZZ^{n}.$ If $f$ is $\grp$-invariant, then $f(\omega)=\ddet^{-1}\sum_{\gamma\in\Gamma^{*}}f^{\gamma}(\omega)=\ddet^{-1/2}f_{0}(\dmat^{T}\omega),$ so $f$ is a rational trigonometric polynomial in $\dmat^{T}\omega$.

\begin{defn} 
Let $f$ be a rational trigonometric polynomial with an sos or sors of functions $g_{j},1\leq j\leq J$. We say that $f$ has a \emph{$\grp$-invariant so(r)s} if its so(r)s representation has the property that $g_{j}^{\gamma}=g_{j}$ for all $\gamma\in\Gamma^{*}$, for all $1\leq j\leq J$.\hfill$\square$
\end{defn}

We now prove a few lemmas regarding $\grp$-invariance and sums of squares representations. Similar results appear in  \cite[Lemma 2.1]{AlgPersI} under more restrictive assumptions.

\begin{lem}
\label{l:ginv}
Let $f$ be a rational trigonometric polynomial.
\begin{enumerate}[(a)]
\item $\sum_{\gamma\in\Gamma^{*}}|f^{\gamma}|^{2}=\sum_{\nu\in\Gamma}|f_{\nu}(\dmat^{T}\cdot)|^{2}.$
\item If $f$ is $\grp$-invariant, then it is an so(r)s if and only if it is a $\grp$-invariant so(r)s.
\end{enumerate}
\end{lem}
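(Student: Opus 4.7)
The plan is to treat part (a) as a polyphase Parseval identity and then derive part (b) from (a) by symmetrizing an arbitrary sum-of-squares representation of $f$.

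For (a), I would introduce the matrix
$$X(\omega)=\ddet^{-1/2}\bigl[e^{i(\omega+\gamma)\cdot\nu}\bigr]_{\gamma\in\Gamma^{*},\,\nu\in\Gamma},$$
which is the same Fourier transform matrix that already appeared in the introduction. Unitarity of $X(\omega)$ reduces to the character orthogonality $\sum_{\nu\in\Gamma}e^{i(\gamma-\gamma')\cdot\nu}=\ddet\,\delta_{\gamma,\gamma'}$ on the finite quotient $\ZZ^{n}/\dmat\ZZ^{n}$. Read as a matrix--vector product, the polyphase definition~(\ref{eq:poly}) says that the vector $[f_{\nu}(\dmat^{T}\omega)]_{\nu\in\Gamma}$ equals $X(\omega)^{*}[f^{\gamma}(\omega)]_{\gamma\in\Gamma^{*}}$, and unitarity of $X(\omega)$ immediately gives the stated equality of squared norms. (One can equivalently expand $\sum_{\nu}|f_{\nu}(\dmat^{T}\omega)|^{2}$ directly and collapse the $\nu$-sum by the same character identity.)

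For (b), one direction is immediate: a $\grp$-invariant so(r)s is, by definition, an so(r)s. For the nontrivial direction, I would use a group-averaging argument. Suppose $f$ is $\grp$-invariant and $f=\sum_{j=1}^{J}|g_{j}|^{2}$ in the appropriate class. Applying the group action $\gamma$ to both sides and using $f^{\gamma}=f$ gives $f=\sum_{j}|g_{j}^{\gamma}|^{2}$ for every $\gamma\in\Gamma^{*}$, so averaging yields
$$f=\frac{1}{\ddet}\sum_{\gamma\in\Gamma^{*}}\sum_{j=1}^{J}|g_{j}^{\gamma}|^{2}=\sum_{j=1}^{J}\frac{1}{\ddet}\sum_{\gamma\in\Gamma^{*}}|g_{j}^{\gamma}|^{2}.$$
Applying part (a) to each $g_{j}$ converts the inner sum into $\sum_{\nu\in\Gamma}|(g_{j})_{\nu}(\dmat^{T}\cdot)|^{2}$, so the functions $h_{j,\nu}:=\ddet^{-1/2}(g_{j})_{\nu}(\dmat^{T}\cdot)$ give the desired representation $f=\sum_{j,\nu}|h_{j,\nu}|^{2}$. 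Each $h_{j,\nu}$ is $\grp$-invariant, as was noted in the text right before the lemma: it is a function of $\dmat^{T}\omega$, and $\dmat^{T}\gamma\in 2\pi\ZZ^{n}$ for all $\gamma\in\Gamma^{*}$.

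The only subtlety to watch is that the polyphase construction keeps us inside the class being used for the so(r)s notion. In the sors case this is automatic, since the polyphase components of a rational trigonometric polynomial are again rational trigonometric polynomials directly from~(\ref{eq:poly}). In the sos case, one has to check that when $g_{j}$ is a genuine trigonometric polynomial then so is $(g_{j})_{\nu}(\dmat^{T}\cdot)$; this follows from a short calculation using~(\ref{eq:poly}) together with the dual relation~(\ref{eq:dualpoly}), so no real obstacle arises. This verification, and careful bookkeeping between the $\gamma$-sum and the $\nu$-sum via the unitarity of $X(\omega)$, are the only places where care is needed.
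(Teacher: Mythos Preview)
Your proposal is correct and follows essentially the same route as the paper. For (a) the paper expands $\sum_{\gamma}|f^{\gamma}|^{2}$ directly via the dual relation~(\ref{eq:dualpoly}) and collapses with the character identity $\sum_{\gamma\in\Gamma^{*}}e^{i\gamma\cdot k}=\ddet\,\delta_{k\equiv 0}$, which is exactly your unitarity-of-$X(\omega)$ argument unpacked; for (b) the paper performs the same group-averaging step $f=\ddet^{-1}\sum_{\gamma}f^{\gamma}=\ddet^{-1}\sum_{j}\sum_{\gamma}|g_{j}^{\gamma}|^{2}$ and then applies (a), arriving at the same $\grp$-invariant generators $\ddet^{-1/2}(g_{j})_{\nu}(\dmat^{T}\cdot)$.
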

\begin{proof}
(a) We use Equation~(\ref{eq:dualpoly}) to compute, for $\omega\in\TT^{n}$ such that the left hand side is defined: $$\sum_{\gamma\in\Gamma^{*}}|f^{\gamma}(\omega)|^{2}=\ddet^{-1}\sum_{\gamma\in\Gamma^{*}}\sum_{\nu,\nu'\in\Gamma}f_{\nu}(\dmat^{T}\omega)\overline{f_{\nu'}(\dmat^{T}\omega)}e^{i(\omega+\gamma)\cdot(\nu-\nu')}=\sum_{\nu\in\Gamma}|f_{\nu}(\dmat^{T}\omega)|^{2},$$ using the fact that for $k\in\ZZ^{n}$, $\sum_{\gamma\in\Gamma^{*}}e^{i\gamma\cdot k}$ is equal to $\ddet$ when $k\equiv0\pmod{\dmat\ZZ^{n}}$ and is 0 otherwise.

(b) The converse is obvious, so if $f=\sum_{j=1}^{J}|g_{j}|^{2},$ then for all $\omega\in\TT^{n}$ where it is defined, $$f(\omega)=\ddet^{-1}\sum_{\gamma\in\Gamma^{*}}f^{\gamma}(\omega)=\ddet^{-1}\sum_{j=1}^{J}\sum_{\gamma\in\Gamma^{*}}|g_{j}^{\gamma}(\omega)|^{2}=\ddet^{-1}\sum_{j=1}^{J}\sum_{\nu\in\Gamma}|(g_{j})_{\nu}(\dmat^{T}\omega)|^{2},$$ the last equality following from part (a) which was just proved. The last expression is clearly a sum of $\grp$-invariant squares, which completes the proof.
\end{proof}

From the proof of (b), it may appear that the number of sors generators will increase by a factor of $\ddet$ when we require them to be $\grp$-invariant. However, if $f$ is $\grp$-invariant and an sors, then $f=g(\dmat^{T}\cdot)$, where $g$ is certainly nonnegative. By Corollary~\ref{c:trigsors}, $g$ has an sors $g=\sum_{j=1}^{J}|g_{j}|^{2},$ where $J\leq 2^{n}$, which means that $f=\sum_{j=1}^{J}|g_{j}(\dmat^{T}\cdot)|^{2}$. On the other hand, when we want an \emph{sos} representation, this argument fails because Corollary~\ref{c:trigsors} may introduce denominators. As such, the $\grp$-invariant sos representation may require at most $\ddet$ times as many generators as the original sos.

The following lemma combines Corollary~\ref{c:trigsors} with an idea from the proof of \cite[Theorem 6.1]{LaiStock}.

\begin{lem}
\label{l:siginvsors}
Suppose $S$ is a nonzero rational trigonometric polynomial such that $S(\omega)\geq0$ for all $\omega\in\TT^{n}$ where it is defined. Let $\Sigma(\omega)=\mathrm{diag}(S(\omega+\gamma))_{\gamma\in\Gamma^{*}}$ (for some ordering of $\Gamma^{*}$). Then $\Sigma^{-1}(\omega)=A(\omega)A(\omega)^{*}$ for all $\omega\in\TT^{n}$ where $\Sigma^{-1}(\omega)$ is defined, and $A(\omega)$ is a $\ddet\times M$ matrix with rational trigonometric polynomial entries such that each column of $A(\omega)$ is a $\grp$-vector, with $M\leq 2^{n}\ddet$.
\end{lem}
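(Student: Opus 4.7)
The plan is to build $A$ by applying Corollary~\ref{c:trigsors} to $1/S$ and then exploiting the character orthogonality relations over $\Gamma$ to force the off-diagonal entries of $AA^{*}$ to vanish.

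First I would observe that $1/S$ is a nonzero rational trigonometric polynomial that is nonnegative wherever it is defined, so by Corollary~\ref{c:trigsors} there exist rational trigonometric polynomials $\{h_{j}\}_{j=1}^{J}$, with $J\leq 2^{n}$, such that
\[
\frac{1}{S(\omega)}=\sum_{j=1}^{J}|h_{j}(\omega)|^{2}
\]
for all $\omega\in\TT^{n}$ at which $1/S$ is defined. Translating by any $\gamma\in\Gamma^{*}$ gives the same representation for $1/S(\omega+\gamma)$ using $h_{j}^{\gamma}$, so each diagonal entry of $\Sigma^{-1}(\omega)$ is already written as a sum of $J$ squares; the remaining issue is that simply stacking these as columns does not kill cross terms $(\gamma\neq\gamma')$.

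To handle this, I would introduce the ``phase-modulated'' family indexed by $(j,\nu)\in\{1,\dots,J\}\times\Gamma$ and defined by
\[
g_{j,\nu}(\omega)=\ddet^{-1/2}h_{j}(\omega)\,e^{-i\omega\cdot\nu},
\]
and take $A(\omega)$ to be the $\ddet\times (J\ddet)$ matrix whose column indexed by $(j,\nu)$ is the $\grp$-vector $[g_{j,\nu}^{\gamma}(\omega)]_{\gamma\in\Gamma^{*}}$. By construction each column is a $\grp$-vector, the entries are rational trigonometric polynomials, and $M=J\ddet\leq 2^{n}\ddet$.

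It remains to verify $AA^{*}=\Sigma^{-1}$. Computing the $(\gamma,\gamma')$ entry where everything is defined gives
\[
(AA^{*})_{\gamma,\gamma'}(\omega)=\ddet^{-1}\sum_{j=1}^{J}h_{j}^{\gamma}(\omega)\overline{h_{j}^{\gamma'}(\omega)}\sum_{\nu\in\Gamma}e^{-i(\gamma-\gamma')\cdot\nu}.
\]
The inner sum is the standard character sum already used in the proof of Lemma~\ref{l:ginv}(a): writing $\gamma-\gamma'=2\pi\dmat^{-T}k$ for some $k\in\ZZ^{n}$, it equals $\ddet$ when $k\in\dmat^{T}\ZZ^{n}$ (equivalently $\gamma\equiv\gamma'\pmod{2\pi\ZZ^{n}}$, i.e.\ $\gamma=\gamma'$ in $\Gamma^{*}$) and zero otherwise. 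Hence
\[
(AA^{*})_{\gamma,\gamma'}(\omega)=\delta_{\gamma,\gamma'}\sum_{j=1}^{J}|h_{j}^{\gamma}(\omega)|^{2}=\delta_{\gamma,\gamma'}\,\frac{1}{S(\omega+\gamma)},
\]
which is exactly $\Sigma^{-1}(\omega)$. The only mildly delicate point I would expect to have to state carefully is the ``where defined'' bookkeeping: both $1/S$ and the columns $g_{j,\nu}$ may have poles, but since $S$ is a fixed rational trigonometric polynomial, the identity $AA^{*}=\Sigma^{-1}$ needs to be asserted only on the cofinite set where $\Sigma^{-1}$ is defined, which matches the statement of the lemma.
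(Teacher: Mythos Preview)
Your proof is correct and essentially identical to the paper's: both apply Corollary~\ref{c:trigsors} to $1/S$, modulate the resulting generators by $\ddet^{-1/2}e^{\pm i\omega\cdot\nu}$ over $\nu\in\Gamma$, and use the character orthogonality $\sum_{\nu\in\Gamma}e^{i\nu\cdot(\gamma-\gamma')}=\ddet\,\delta(\gamma-\gamma')$ to kill the off-diagonal terms. One small terminological slip: the set where $\Sigma^{-1}$ is defined need not be cofinite in $\TT^{n}$ for $n\geq 2$ (zero sets of trigonometric polynomials can be positive-dimensional); it is merely open, dense, and of full measure, which is all the lemma requires.
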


\begin{proof}
By Corollary~\ref{c:trigsors}, there are rational trigonometric polynomials $s_{j}$ such that $1/S(\omega)=\sum_{j=1}^{J}|s_{j}(\omega)|^{2}$ for all $\omega\in\TT^{n}$ where $1/S(\omega)$ is defined, and $J\leq 2^{n}$. Let $\mathbf{s}(\omega)=[s_{1}(\omega),s_{2}(\omega),\ldots,s_{J}(\omega)]$. Let $$A(\omega)=\ddet^{-1/2}[e^{i\nu\cdot(\omega+\gamma)}\mathbf{s}(\omega+\gamma)]_{\gamma\in\Gamma^{*},(\nu,j)\in\Gamma\times\{1,\ldots,J\}},$$ which is a $\ddet\times (\ddet J)$ matrix with rational trigonometric polynomial entries. Moreover, given $(\nu,j)\in\Gamma\times\{1,\ldots,J\},$ if we let $a_{(\nu,j)}(\omega)=\ddet^{-1/2}e^{i\nu\cdot\omega}s_{j}(\omega)$, we see that $A(\omega)_{\gamma,(\nu,j)}=a_{(\nu,j)}(\omega+\gamma)$, so $A(\omega)$ has $\grp$-vector columns. Then for all $\gamma,\gamma'\in\Gamma^{*}$, letting $\delta:2\pi\dmat^{-T}\ZZ^{n}\rightarrow\{0,1\}$ always take value zero except $\delta(0)=1$, \begin{align*}(A(\omega)A(\omega)^{*})_{\gamma,\gamma'}&=\ddet^{-1}\sum_{j=1}^{J}s_{j}(\omega+\gamma)\overline{s_{j}(\omega+\gamma')}\sum_{\nu\in\Gamma}e^{i\nu\cdot(\gamma-\gamma')}\\
&=\delta(\gamma-\gamma')\sum_{j=1}^{J}|s_{j}(\omega+\gamma)|^{2}\\
&=\delta(\gamma-\gamma')\frac{1}{S(\omega+\gamma)}=\Sigma^{-1}(\omega)_{\gamma,\gamma'}.\end{align*} This clearly holds wherever $\Sigma^{-1}(\omega)$ is defined.
\end{proof}

We are now ready to prove Theorem~\ref{th:osqmf}.

\emph{Proof of Theorem~\ref{th:osqmf}:} To avoid excessive verbiage, throughout this proof, all equalities should be taken to hold wherever both sides are defined, which because we are considering a finite collection of rational trigonometric polynomials, is an open, dense set with full measure.

\emph{(i) Proof that A implies B:} Suppose Statement A of the theorem. We observe that by Corollary~\ref{c:trigsors} and Lemma~\ref{l:ginv}(b), for $\omega\in\TT^{n}$, \begin{equation}\label{eq:oepfsors}\frac{1}{S(\dmat^{T}\omega)}-\sum_{\gamma\in\Gamma^{*}}\frac{|\tau(\omega+\gamma)|^{2}}{S(\omega+\gamma)}=\sum_{j=1}^{J}|g_{j}(\dmat^{T}\omega)|^{2},\end{equation} where $g_{j}$ are rational trigonometric polynomials, since the left hand side is nonnegative by Equation~(\ref{eq:obsubqmf}) and is clearly $\grp$-invariant. Moreover, from the argument after Lemma~\ref{l:ginv}, $J$ in Equation~(\ref{eq:oepfsors}) is no greater than $2^{n}$. Let $H(\omega)=[\tau(\omega+\gamma)]_{\gamma\in\Gamma^{*}}$ be a column vector, and let $G(\omega)=[g_{j}(\omega)]_{j=1}^{J}$. Recall that $\Sigma(\omega)=\mathrm{diag}(S(\omega+\gamma))_{\gamma\in\Gamma^{*}}.$ We observe that \begin{align*}&[H(\omega)\;S(\dmat^{T}\omega)H(\omega)G(\dmat^{T}\omega)^{*}\;\Sigma(\omega)-S(\dmat^{T}\omega)H(\omega)H(\omega)^{*}]\\&\quad\times \begin{bmatrix} S(\dmat^{T}\omega)&0&0\\0&I_{J}&0\\0&0&\Sigma^{-1}(\omega)\end{bmatrix}\begin{bmatrix} H(\omega)^{*}\\ S(\dmat^{T}\omega)G(\dmat^{T}\omega)H(\omega)^{*}\\\Sigma(\omega)-S(\dmat^{T}\omega)H(\omega)H(\omega)^{*}\end{bmatrix}\\
&=S(\dmat^{T}\omega)H(\omega)H(\omega)^{*}+\left[S(\dmat^{T}\omega)^{2}G(\dmat^{T}\omega)^{*}G(\dmat^{T}\omega)\right]H(\omega)H(\omega)^{*}\\ &\quad+\Sigma(\omega)-\left[2S(\dmat^{T}\omega)\right]H(\omega)H(\omega)^{*}+\left[S(\dmat^{T}\omega)^{2}H(\omega)^{*}\Sigma^{-1}(\omega)H(\omega)\right]H(\omega)H(\omega)^{*}\\&=\Sigma(\omega)-S(\dmat^{T}\omega)H(\omega)H(\omega)^{*}\\ &\quad+S(\dmat^{T}\omega)^2\left[G(\dmat^{T}\omega)^{*}G(\dmat^{T}\omega)+H(\omega)^{*}\Sigma^{-1}(\omega)H(\omega)\right]H(\omega)H(\omega)^{*}\\&=\Sigma(\omega),\end{align*}
where the last equation follows by Equation~(\ref{eq:oepfsors}).
The columns of $S(\dmat^{T}\omega)H(\omega)G(\dmat^{T}\omega)^{*}$ are $\grp$-vectors, since $H(\omega)$ is and the other factors are $\grp$-invariant. Now we use Lemma~\ref{l:siginvsors} to see that $\Sigma(\omega)^{-1}=A(\omega)A(\omega)^{*},$ where the columns of $A$ are $\grp$-vectors. We observe that for any rational trigonometric polynomial $g$: $$(\Sigma(\omega)-S(\dmat^{T}\omega)H(\omega)H(\omega)^{*})[g^{\gamma}(\omega)]_{\gamma\in\Gamma^{*}} =\left[S^{\gamma}(\omega)g^{\gamma}(\omega)-S(\dmat^{T}\omega)\tau^{\gamma}(\omega)\sum_{\gamma'\in\Gamma^{*}} \overline{\tau^{\gamma'}(\omega)}g^{\gamma'}(\omega)\right]_{\gamma\in\Gamma^{*}}$$ So we see that the columns of $(\Sigma(\omega)-S(\dmat^{T}\omega)H(\omega)H(\omega)^{*})A(\omega)$ are $\grp$-vectors, and $A$ has $M\leq 2^{n}\ddet$ columns. Then the following rational trigonometric polynomials are defined and satisfy Equation~(\ref{eq:oepfull}) with $\tau$:
\begin{align*}
q_{1,j}(\omega)&=S(\dmat^{T}\omega)\tau(\omega)\overline{g_j(\dmat^{T}\omega)}&1\leq j\leq J\leq 2^{n},\\
q_{2,m}(\omega)&=S(\omega)a_{m}(\omega)-S(\dmat^{T}\omega)\tau(\omega)\sum_{\gamma\in\Gamma^{*}}\overline{\tau(\omega+\gamma)}a_{m}(\omega+\gamma)&1\leq m\leq M\leq 2^{n}\ddet,
\end{align*}
where the $m$th column of $A(\omega)$ is a $\grp$-vector for the rational trigonometric polynomial $a_{m}(\omega),$ $1\leq m\leq M$. This construction also gives the bound $r\leq 2^{n}(1+\ddet).$\\

\emph{(ii) Proof that B implies A:} Suppose Statement B of the theorem. Then for $\omega\in\TT^{n}$, $$\Sigma(\omega)-S(\dmat^{T} \omega)H(\omega)H(\omega)^{*}=Q(\omega)Q(\omega)^{*},$$ where $Q(\omega)$ is a $\ddet\times r$ matrix with rational trigonometric polynomial entries, and columns of the form $[q_{\ell}(\omega+\gamma)]_{\gamma\in\Gamma^{*}}$. Taking the determinant on both sides of the previous equation, the left hand gives 
 \begin{align*}\mathrm{det}(\Sigma(\omega)-S(\dmat^{T}\omega)H(\omega)H(\omega)^{*})&=\mathrm{det}(\Sigma^{1/2}(\omega)(I-S(\dmat^{T}\omega)\Sigma^{-1/2}(\omega)H(\omega)H(\omega)^{*}\Sigma^{-1/2}(\omega))\Sigma^{1/2}(\omega))\\&=\mathrm{det}(\Sigma(\omega))(1-S(\dmat^{T}\omega)H(\omega)^{*}\Sigma^{-1}(\omega)H(\omega)).\end{align*} Since $Q(\omega)Q(\omega)^{*}$ is positive semidefinite for all $\omega\in\TT^{n}$, its determinant is nonnegative. $S$ is nonnegative, so for $\omega\in\TT^{n}$, $1/\mathrm{det}(\Sigma(\omega))\geq0,$ and $1/S(\mathcal{M}^{T}\omega)\geq0$. Then Equation~(\ref{eq:obsubqmf}) follows, since $$\frac{1}{S(\dmat^{T}\omega)}-\sum_{\gamma\in\Gamma^{*}}\frac{|\tau(\omega+\gamma)|^{2}}{S(\omega+\gamma)}=\frac{\mathrm{det}(Q(\omega)Q(\omega)^{*})}{\mathrm{det}(\Sigma(\omega))S(\mathcal{M}^{T}\omega)}.$$\hfill$\square$
  
Now we turn to the analytical part of the construction, with the proof of Theorem~\ref{th:osqmftwf}. We will see that the additional conditions on the vmr function $S$ are used in order to guarantee the ess. boundedness of the constructed highpass masks, as well as that of $[\hat{\phi},\hat{\phi}]$, as required in Result~\ref{res:oep}(a) and (c).
 
 \emph{Proof of Theorem~\ref{th:osqmftwf}:} We seek to apply Result~\ref{res:oep}. It is clear from the assumptions that (i) and (ii) hold, since the conditions on $S$ mean that (\ref{eq:oepfull}) holds for all $\omega\in\TT^{n}$. Since $\tau$ is a trigonometric polynomial, it is continuous and therefore bounded; because its corresponding filter has finite support, $\phi$ is compactly supported, so $\hat{\phi}$ is continuous, and (b) also holds. Rearranging Equation~(\ref{eq:oepfull}) and looking at the case $\gamma=0,$ we see that $\sum_{\ell=1}^{r}|q_{\ell}(\omega)|^{2}=S(\omega)-S(\mathcal{M}^{T}\omega)|\tau(\omega)|^{2},$ so the ess. boundedness of the right hand side implies the ess. boundedness of $q_{\ell}$ for all $1\leq \ell\leq r$. This proves (a), so it remains to show (c).

We argue as in the proof of the first half of \cite[Lemma 2.1]{HanOrtho}. Recall that $\hat{\phi}(\omega):=\prod_{j=1}^{\infty}\tau((\dmat^{-T})^{j}\omega)$ for all $\omega\in\RR^{n}$. Let \begin{align}f_{0}(\omega)&:=\chi_{[-\pi,\pi)^{n}}(\omega)(S(\omega))^{-1/2},\text{ and for all }j\geq1,\text{ let }\notag\\ f_{j}(\omega)&:=\tau(\dmat^{-T}\omega)f_{j-1}(\dmat^{-T}\omega)=\chi_{(\dmat^{T})^{j}[-\pi,\pi)^{n}}(\omega)(S((\dmat^{-T})^{j}\omega))^{-1/2}\prod_{\ell=1}^{j}\tau((\dmat^{-T})^{\ell}\omega).\label{eq:fjdef}\end{align} We now prove by induction that $[f_{j},f_{j}](\omega)=\sum_{k\in\ZZ^{n}}|f_{j}(\omega+2\pi k)|^{2}\leq 1/S(\omega)$. Clearly, $[f_{0},f_{0}](\omega)=1/S(\omega),$ so suppose by way of induction that for some $j-1\geq0,$ $[f_{j-1},f_{j-1}](\omega)\leq 1/S(\omega)$ for all $\omega\in\RR^{n}$. Then
\begin{align}
[f_{j},f_{j}](\omega)&=\sum_{k\in\ZZ^{n}}|\tau(\dmat^{-T}(\omega+2\pi k))|^{2}|f_{j-1}(\dmat^{-T}(\omega+2\pi k))|^{2}\notag\\
&=\sum_{\gamma\in\Gamma^{*}}\sum_{k\in\ZZ^{n}}|\tau(\dmat^{-T}\omega+\gamma+2\pi k)|^{2}|f_{j-1}(\dmat^{-T}\omega+\gamma+2\pi k)|^{2}\notag\\
&=\sum_{\gamma\in\Gamma^{*}}|\tau(\dmat^{-T}\omega+\gamma)|^{2}[f_{j-1},f_{j-1}](\dmat^{-T}\omega+\gamma)\notag\\
&\leq\sum_{\gamma\in\Gamma^{*}}|\tau(\dmat^{-T}\omega+\gamma)|^{2}\frac{1}{S(\dmat^{-T}\omega+\gamma)}\label{eq:bracketbnd1}\\
&\leq\frac{1}{S(\dmat^{T}(\dmat^{-T}\omega))}\label{eq:bracketbnd2}\\
&=\frac{1}{S(\omega)},\notag
\end{align}
where we applied Equation~(\ref{eq:obsubqmf}), which holds for all $\omega\in\TT^{n}$ because of the assumptions on $S$, to obtain the last inequality. We note that as $j\rightarrow\infty,$ using the continuity of $S$ at 0, $f_{j}(\omega)\rightarrow \hat{\phi}(\omega)$. Applying Fatou's Lemma with the counting measure, since $|f_{j}(\omega)|^{2}\geq0$ for all $\omega\in\RR^{n}$ and $j\geq0,$ we see that $$[\hat{\phi},\hat{\phi}](\omega)=\sum_{k\in\ZZ^{n}}|\hat{\phi}(\omega+2\pi k)|^{2}=\sum_{k\in\ZZ^{n}}\lim_{j\rightarrow\infty}|f_{j}(\omega+2\pi k)|^{2}\leq \liminf_{j\rightarrow\infty}\sum_{k\in\ZZ^{n}}|f_{j}(\omega+2\pi k)|^{2}\leq \frac{1}{S(\omega)}\text{ for all }\omega\in\TT^{n},$$ whence applying the ess. boundedness assumption on $1/S$ yields (c), which completes the proof.\hfill$\square$
 
 It is easy to see that under the weaker assumption that $1/S$ is integrable over $[-\pi,\pi)^{n},$ we may not have that all of the masks $S,$ $q_{\ell}$ are ess. bounded, so that (i) or (a) may not hold, but the argument for (c) shows that $$\|\hat{\phi}\|_{2}^{2}=\int_{[-\pi,\pi)^{n}}[\hat{\phi},\hat{\phi}](\omega)\mathrm{d}\omega\leq\int_{[-\pi,\pi)^{n}}\frac{1}{S(\omega)}\mathrm{d}\omega<+\infty,$$ which shows that $\hat{\phi},$ and therefore also $\phi$, belong to $L^{2}(\RR^{n})$.
 

Combining Theorems~\ref{th:osqmf} and \ref{th:osqmftwf} yields the following corollary, which applies in the setting that $\tau$ satisfies the ordinary sub-QMF condition.

\begin{cor}
\label{c:sqmf}
Let $\tau$ be a trigonometric polynomial lowpass mask. The following are equivalent:
\begin{enumerate}[(A)]
\item The sub-QMF condition holds: \[\sum_{\gamma\in\Gamma^{*}}|\tau(\omega+\gamma)|^{2}\leq 1\quad\text{for all }\omega\in\mathbb{T}^{n},\]
\item There exist rational trigonometric polynomials $\{q_{\ell}\}_{\ell=1}^{r}$ such that for all $\gamma\in\Gamma^{*}$ and $\omega\in\mathbb{T}^{n}$: \begin{equation}\label{eq:uepfull}\tau(\omega)\overline{\tau(\omega+\gamma)}+\sum_{\ell=1}^{r}q_{\ell}(\omega)\overline{q_{\ell}(\omega+\gamma)}=\begin{cases}1&\text{if }\gamma=0\\0&\text{otherwise.}\end{cases}\end{equation}
\end{enumerate}
Moreover, provided that either (A) or (B) holds, there exist (a potentially different set of) rational trigonometric polynomials $\{q_{\ell}\}_{\ell=1}^{r}$ such that for all $\gamma\in\Gamma^{*}$ and $\omega\in\mathbb{T}^{n}$, Equation~(\ref{eq:uepfull}) holds, with $r\leq 2^{n}(1+\ddet)$. When one of (A) or (B) holds, the wavelet system defined by the combined MRA mask $(\tau,q_{1},\ldots,q_{r})$ is a tight wavelet frame.
\end{cor}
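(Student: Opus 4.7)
The plan is to obtain Corollary~\ref{c:sqmf} as an immediate specialization of Theorems~\ref{th:osqmf} and \ref{th:osqmftwf} to the constant vmr function $S \equiv 1$. First I would observe that with this choice, $S$ is a (constant, hence rational) trigonometric polynomial which is nonnegative and nonzero on $\TT^{n}$, so the hypotheses of Theorem~\ref{th:osqmf} are satisfied. The oblique sub-QMF inequality in Equation~(\ref{eq:obsubqmf}) reduces, after cancelling $S(\omega+\gamma) = S(\dmat^{T}\omega) = 1$, to the ordinary sub-QMF condition stated in (A) of the corollary; and Equation~(\ref{eq:oepfull}) reduces term by term to Equation~(\ref{eq:uepfull}). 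Hence the equivalence of (A) and (B), together with the bound $r \leq 2^{n}(1+\ddet)$ on the number of highpass masks, is inherited directly from Theorem~\ref{th:osqmf}.

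Next, I would verify the extra analytical hypotheses needed to invoke Theorem~\ref{th:osqmftwf}. With $S \equiv 1$, $S$ is continuous everywhere and in particular at $0$, with $S(0) = 1$. Moreover $S \equiv 1/S \equiv 1 \in L^{\infty}(\TT^{n})$ trivially. Theorem~\ref{th:osqmftwf} then yields that the wavelet system defined by the combined MRA mask $(\tau, q_{1}, \ldots, q_{r})$ constructed in (B) is a tight wavelet frame for $L^{2}(\RR^{n})$, which is the final assertion of the corollary.

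There is no substantive obstacle here: the work was done in the two main theorems, and the corollary merely records what they say in the UEP regime. The only thing worth flagging is that we really do get rational trigonometric polynomial highpass masks (rather than genuine trigonometric polynomial ones), which is the crucial improvement over the classical UEP constructions in \cite{AlgPersI,LaiStock} and ultimately relies on Corollary~\ref{c:trigsors} via the proof of Theorem~\ref{th:osqmf}.
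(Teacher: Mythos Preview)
Your proposal is correct and matches the paper's own approach exactly: the paper simply states that the corollary follows by combining Theorems~\ref{th:osqmf} and \ref{th:osqmftwf}, which is precisely your specialization to $S\equiv 1$. There is nothing to add.
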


This corollary is quite similar to the result \cite[Thm. 2.2]{AlgPersI}, but both statements here are weaker than the ones that appear in that theorem. In particular, the analogous statement for (A) in \cite{AlgPersI} requires the existence of an sos representation for $1-\sum_{\gamma}|\tau^{\gamma}|^{2}$, but their result guarantees the existence of $q_{\ell}$ which are trigonometric polynomials in (B).

Let us recall a few basic definitions.

\begin{defn} For a lowpass mask $\tau$, the \emph{accuracy number} is defined to be the minimum order of vanishing of $\tau$ at the points $\gamma\in\Gamma^{*}\setminus\{0\}$. For a highpass mask $g$, the \emph{vanishing moments} are the order of vanishing of $g$ at $\omega=0$.
\end{defn}

Inspecting the OEP conditions, we see that we can give a lower bound on the number of vanishing moments of the constructed wavelet system in terms of the accuracy number of $\tau$ and the vanishing moments of $$f(S,\tau;\cdot)=\frac{1}{S(\dmat^{T}\cdot)}-\sum_{\gamma\in\Gamma^{*}}\frac{|\tau^{\gamma}|^{2}}{S^{\gamma}}.$$ A similar discussion for approximation orders is given in \cite{DHRS}, without the formulation involving $f(S,\tau;\cdot)$. The following proposition describes this lower bound.

\begin{prop}
\label{prop:oephpvms}
In Theorem~\ref{th:osqmf}, let $\tau$ have accuracy number $a>0$, $f(S,\tau;\cdot)$ have vanishing moments $m$, and $S-S(\dmat^{T}\cdot)|\tau|^{2}$ have vanishing moments $j$. If (A) or (B) holds in that theorem, then the highpass masks $q_{\ell}, 1\leq \ell\leq r$ in (B) have at least $\lfloor j/2\rfloor\geq\lfloor\min\{a,m/2\}\rfloor$ vanishing moments.
\end{prop}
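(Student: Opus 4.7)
The plan is to read off the vanishing moment bound for each $q_\ell$ directly from the diagonal OEP relation in Equation~(\ref{eq:oepfull}), and then to relate the vanishing order of $S-S(\dmat^T\cdot)|\tau|^2$ at the origin to the two input quantities $a$ and $m$ via an algebraic identity involving $f(S,\tau;\cdot)$.

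First, I would specialize Equation~(\ref{eq:oepfull}) to $\gamma = 0$, which yields
$$\sum_{\ell=1}^{r}|q_{\ell}(\omega)|^{2}=S(\omega)-S(\dmat^{T}\omega)|\tau(\omega)|^{2}.$$
By hypothesis the right-hand side vanishes to order $j$ at $\omega=0$. Since each summand $|q_\ell|^2$ is nonnegative near the origin, each is dominated pointwise by the sum, and hence each $|q_\ell(\omega)|^2 = O(|\omega|^j)$. Writing $q_\ell$ in power series about $0$, a lowest-order nonzero term of total degree $k$ produces in $|q_\ell|^2$ a nonvanishing term of degree $2k$, so $2k\geq j$ forces $k\geq\lceil j/2\rceil \geq \lfloor j/2\rfloor$ vanishing moments for every $\ell$.

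Second, to get the lower bound $\lfloor j/2\rfloor\geq\lfloor\min\{a,m/2\}\rfloor$, I would multiply the defining formula for $f(S,\tau;\cdot)$ through by $S(\omega)S(\dmat^T\omega)$ and isolate the diagonal contribution, obtaining the identity
$$S(\omega)-S(\dmat^{T}\omega)|\tau(\omega)|^{2}=S(\omega)S(\dmat^{T}\omega)\,f(S,\tau;\omega)+\sum_{\gamma\in\Gamma^{*}\setminus\{0\}}\frac{S(\omega)S(\dmat^{T}\omega)}{S(\omega+\gamma)}|\tau(\omega+\gamma)|^{2}.$$
Since $S$ is continuous with $S(0)=1$ (and assumed nonvanishing at the coset representatives so that all the terms are well-defined at the origin), the prefactor $S(\omega)S(\dmat^T\omega)$ does not decrease order of vanishing, so the first term on the right vanishes to order exactly $m$. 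Each term in the sum carries a factor $|\tau(\omega+\gamma)|^2$ with $\gamma\neq 0$, which by the definition of accuracy number vanishes to order at least $2a$ at $\omega=0$, while the rational prefactor $S(\omega)S(\dmat^T\omega)/S(\omega+\gamma)$ is finite and nonzero there. Therefore the right-hand side vanishes to order at least $\min\{m,2a\}$, giving $j\geq\min\{m,2a\}$ and, upon dividing by two and taking floors, the desired inequality.

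The main obstacle is purely bookkeeping: verifying that the rational factors $S(\omega)S(\dmat^T\omega)/S(\omega+\gamma)$ do not conspire to cancel the apparent order of vanishing of $|\tau(\omega+\gamma)|^2$. This is handled by the implicit nondegeneracy of $S$ near each point of $\Gamma^*$ (which is automatic under the $L^\infty$ hypotheses of Theorem~\ref{th:osqmftwf}, and is the natural well-definedness assumption needed for the quantities $f(S,\tau;\cdot)$ and $S - S(\dmat^T\cdot)|\tau|^2$ to have a meaningful vanishing order at the origin in the first place). Once that is noted, the proof reduces to the two observations above.
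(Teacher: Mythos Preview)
Your argument is correct and follows essentially the same route as the paper's proof: both specialize the OEP identity~(\ref{eq:oepfull}) to $\gamma=0$ to bound the vanishing of each $q_\ell$ by $\lfloor j/2\rfloor$, and both establish $j\geq\min\{m,2a\}$ by multiplying the defining expression for $f(S,\tau;\cdot)$ through by $S(\omega)S(\dmat^{T}\omega)$ and separating the $\gamma=0$ term from the $\gamma\neq 0$ terms. The only cosmetic difference is that the paper first isolates $\frac{1}{S(\dmat^{T}\omega)}-\frac{|\tau(\omega)|^{2}}{S(\omega)}$ and then multiplies, whereas you write the full identity in one line; your explicit remark on the needed nondegeneracy of $S$ at the points of $\Gamma^{*}$ is a point the paper leaves implicit.
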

\begin{proof}
Rearranging the OEP conditions (\ref{eq:oepfull}) with $\gamma=0,$ we get $$S(\omega)-S(\dmat^{T}\omega)|\tau(\omega)|^{2}=\sum_{\ell=1}^{r}|q_{\ell}(\omega)|^{2},$$ so if the left-hand side is $O(|\omega|)^{j}$ for $\omega\approx0,$ then $q_{\ell}=O(|\omega|^{j/2})$ there, for all $1\leq \ell\leq r.$ If $f(S,\tau;\omega)=O(|\omega|^{m})$ for $\omega\approx0$, then $$\frac{1}{S(\dmat^{T}\omega)}-\frac{|\tau(\omega)|^{2}}{S(\omega)}=O(|\omega|^{m})+\sum_{\gamma\in\Gamma^{*}\setminus\{0\}}\frac{|\tau(\omega+\gamma)|^{2}}{S(\omega+\gamma)},$$ which is $O(|\omega|^{\min\{m,2a\}})$ for $\omega\approx0$. Then \begin{align*}S(\omega)-S(\dmat^{T}\omega)|\tau(\omega)|^{2}&=S(\omega)S(\dmat^{T}\omega)\left(\frac{1}{S(\dmat^{T}\omega)}-\frac{|\tau(\omega)|^{2}}{S(\omega)}\right)\\&=S(\omega)S(\dmat^{T}\omega)O(|\omega|^{\min\{m,2a\}})\text{ for }\omega\approx0,\end{align*} which means that $j\geq\min\{m,2a\}$. This completes the proof.
\end{proof}

\begin{rem}
By careful consideration of the proofs of Theorems~\ref{th:osqmf} and \ref{th:osqmftwf}, it is possible to show that for a trigonometric polynomial lowpass mask $\tau$ and rational trigonometric polynomial masks $q_{\ell},1\leq \ell\leq r$, satisfying the OEP conditions~(\ref{eq:oepfull}) with some nonzero, nonnegative $S\in L^{\infty}(\TT^{n})$, then the oblique sub-QMF condition (\ref{eq:obsubqmf}) holds on a large subset of $\TT^{n}$. This inequality may be used to show that when in addition, $1/S\in L^{\infty}(\TT^{n})$, the wavelet system defined by the combined MRA mask $(\tau,q_{1},\ldots,q_{r})$ is a tight wavelet frame, or if $1/S$ is only in $L^{1}(\TT^{n})$, then $\phi\in L^{2}(\RR^{n})$, where $\phi$ is the refinable function associated with $\tau$. Thus, if it happens that one has additional information about the vanishing moment recovery function $S$, then it is not necessary to verify that $[\hat{\phi},\hat{\phi}]\in L^{\infty}(\TT^{n})$, even when $S$ is not a rational trigonometric polynomial. \hfill$\square$
\end{rem}

\section{Scaling Oblique Laplacian Pyramids}
\label{S:slp}

We now consider another perspective on Theorem~\ref{th:osqmf}, which is similar to the one described in the introduction for the UEP-based construction. This approach follows the one taken in \cite{SLP}, but rather than working with the Laplacian pyramid (LP) matrix coming from the polyphase components as was done there, in keeping with the current work, we consider the LP matrix with $\grp$-vector columns\footnote{To translate between these, consider the Fourier transform matrix $X(\omega)=[\ddet^{-1/2}e^{i(\omega+\gamma)\cdot\nu}]_{\gamma\in\Gamma^{*},\nu\in\Gamma}$. For a mask $g(\omega),$ if $G(\omega)=[g(\omega+\gamma)]_{\gamma\in\Gamma^{*}},$ then $[g_{\nu}(\dmat^{T}\omega)]_{\nu\in\Gamma}=X(\omega)^{*}G(\omega)$ (c.f. Equation~(\ref{eq:poly})).}. Given a lowpass mask $\tau$, let $H(\omega)=[\tau(\omega+\gamma)]_{\gamma\in\Gamma^{*}}$. Then we define the Laplacian pyramid matrix $\Phi_{\tau}(\omega)=[H(\omega)\;(I-H(\omega)H(\omega)^{*})X(\omega)],$ which is a $\ddet\times(\ddet+1)$ matrix with trigonometric polynomial entries and $\grp$-vector columns, and $X(\omega)=[\ddet^{-1/2}e^{i(\omega+\gamma)\cdot\nu}]_{\gamma\in\Gamma^{*},\nu\in\Gamma}$ is the Fourier transform matrix. When $\Phi_{\tau}(\omega)\Phi_{\tau}(\omega)^{*}=I,$ then by inspecting the entries of this matrix product, we see that $\tau$ satisfies the UEP conditions with the highpass masks $q_{\nu}(\omega)=\ddet^{-1/2}e^{i\omega\cdot\nu}-\tau(\omega)\overline{\tau_{\nu}(\dmat^{T}\omega)}.$ However, this requires $\tau$ to satisfy the restrictive QMF condition. Even when this condition does not hold, however, we can see that $$\Phi_{\tau}(\omega)\left[\begin{array}{c}H(\omega)^{*}\\ X(\omega)^{*}\end{array}\right]=[H(\omega)\;(I-H(\omega)H(\omega)^{*})X(\omega)]\left[\begin{array}{c}H(\omega)^{*}\\ X(\omega)^{*}\end{array}\right]=I,$$ so $\Phi_{\tau}$ has a right-inverse. But this right-inverse does not have the structure of a wavelet filter bank: the first row is a $\grp$-vector for the lowpass mask $\tau,$ and the remaining rows are $\grp$-vectors for some trigonometric polynomials $\tilde{q}_{\nu},\nu\in\Gamma$. Then $\tilde{q}_{\nu}(\omega)=\ddet^{-1/2}e^{i\omega\cdot\nu},$ so $\tilde{q}_{\nu}(0)=\ddet^{-1/2}\neq0$, and therefore $\tilde{q}_{\nu}$ are not wavelet masks. To correct this, we try scaling the matrix $\Phi_{\tau}$ to get a new filter bank satisfying the UEP conditions. We want a matrix $D(\omega)$ with trigonometric polynomial entries such that $\Phi_{\tau}(\omega)D(\omega)\Phi_{\tau}(\omega)^{*}=I.$ If $D(\omega)$ has a factorization as $B(\dmat^{T}\omega)B(\dmat^{T}\omega)^{*}$ for some $(\ddet+1)\times(r+1)$ matrix $B(\omega)$ with trigonometric polynomial entries, the product $\Phi'(\omega)=\Phi_{\tau}(\omega)B(\dmat^{T}\omega)$ will have $\grp$-vector columns and satisfy $\Phi'(\omega)\Phi'(\omega)^{*}=I$. Provided $\Phi'$ still has a lowpass mask generating its first column and highpass masks for the remaining columns, we will have a new collection of masks satisfying the UEP conditions, and an associated tight wavelet frame generated by these. In the introduction, we discussed some of the possible factorizations for $D(\omega)$ and the different constructions to which these lead. Now, we translate these ideas to the case of the OEP.

Suppose that $S$ is a nonnegative rational trigonometric polynomial satisfying $S(0)=1,$ and let $\tau$ be a lowpass mask satisfying the oblique sub-QMF condition with $S$. Recalling that $\Sigma(\omega)=\mathrm{diag}(S(\omega+\gamma))_{\gamma\in\Gamma^{*}}$, we define the $\ddet\times(\ddet+1)$ oblique Laplacian pyramid (OLP) matrix $\Phi_{S,\tau}(\omega)=[H(\omega)\;(\Sigma(\omega)-S(\dmat^{T}\omega)H(\omega)H(\omega)^{*})X(\omega)],$ where $X(\omega)$ is the Fourier transform matrix as above. Then $$\Phi_{S,\tau}(\omega)\left[\begin{array}{c} S(\dmat^{T}\omega)H(\omega)^{*}\\ X(\omega)^{*}\end{array}\right]= \Sigma(\omega),$$ so inverting $\Sigma(\omega),$ the matrix $\Phi_{S,\tau}$ has a right-inverse almost everywhere, in particular, wherever $S(\omega+\gamma)\neq0$ for all $\gamma\in\Gamma^{*}$. However, the second matrix in this product is once again not a wavelet filter bank, so as before, we will attempt to correct this by scaling the masks in $\Phi_{S,\tau}$ in order to get a new collection of masks satisfying the OEP conditions.

Let $a(\dmat^{T}\omega)=S(\dmat^{T}\omega)(2-S(\dmat^{T}\omega)H(\omega)^{*}\Sigma(\omega)^{-1}H(\omega)).$ Then \begin{align*}\Phi_{S,\tau}&(\omega)\begin{bmatrix}a(\dmat^{T}\omega)&0\\0&X(\omega)^{*}\Sigma(\omega)^{-1}X(\omega)\end{bmatrix}\Phi_{S,\tau}(\omega)^{*}\\&=a(\dmat^{T}\omega)H(\omega)H(\omega)^{*}+[\Sigma(\omega)-S(\dmat^{T}\omega)H(\omega)H(\omega)^{*}]\Sigma(\omega)^{-1}[\Sigma(\omega)-S(\dmat^{T}\omega)H(\omega)H(\omega)^{*}]\\&=a(\dmat^{T}\omega)H(\omega)H(\omega)^{*}+\Sigma(\omega)-2S(\dmat^{T}\omega)H(\omega)H(\omega)^{*}+S(\dmat^{T}\omega)^{2}(H(\omega)^{*}\Sigma(\omega)^{-1}H(\omega))H(\omega)H(\omega)^{*}\\&=\Sigma(\omega)+[a(\dmat^{T}\omega)+S(\dmat^{T}\omega)(-2+S(\dmat^{T}\omega)(H(\omega)^{*}\Sigma(\omega)^{-1}H(\omega)))]H(\omega)H(\omega)^{*}\\&=\Sigma(\omega).\end{align*} Note that $\Sigma(\omega)^{-1}$ is always well-defined as a rational trigonometric polynomial matrix so long as $S\not\equiv0,$ but if $S$ has zeroes, then $\Sigma(\omega)^{-1}$ will have poles. The assumptions of Theorem~\ref{th:osqmftwf} on $S$ also ensure that $\Sigma(\omega)^{-1}$ has no poles.

Then to obtain masks satisfying the OEP conditions, we want a factorization of the scaling matrix as \begin{equation}\label{eq:slpfact}\begin{bmatrix}a(\dmat^{T}\omega)&0\\0&X(\omega)^{*}\Sigma(\omega)^{-1}X(\omega)\end{bmatrix}=B(\dmat^{T}\omega) \begin{bmatrix}S(\dmat^{T}\omega)&0\\0&I\end{bmatrix} B(\dmat^{T}\omega)^{*},\end{equation} in which case $\Phi'(\omega)=\Phi_{S,\tau}(\omega)B(\dmat^{T}\omega)$ will have $\grp$-vector columns, and provided its first column is generated by a lowpass mask and the rest are highpass, we will have a new collection of masks satisfying the OEP conditions, since $$\Phi'(\omega)\begin{bmatrix}S(\dmat^{T}\omega)&0\\0&I\end{bmatrix}\Phi'(\omega)^{*}=\Phi_{S,\tau}(\omega)\begin{bmatrix}a(\dmat^{T}\omega)&0\\0&X(\omega)^{*}\Sigma(\omega)^{-1}X(\omega)\end{bmatrix}\Phi_{S,\tau}(\omega)^{*}=\Sigma(\omega).$$

Now we see that different factorizations of the scaling matrix lead to different constructions. In the proof of Theorem~\ref{th:osqmf}, we do not modify the lowpass mask, and add highpass masks corresponding to the sors generators for $a(\dmat^{T}\omega)-S(\dmat^{T}\omega)=S(\dmat^{T}\omega)^{2}(1/S(\dmat^{T}\omega)-H(\omega)^{*}\Sigma(\omega)^{-1}H(\omega))$. Then we might write the factorization of Equation~(\ref{eq:slpfact}) with $$B(\dmat^{T}\omega)=\begin{bmatrix}1&S(\dmat^{T}\omega)G(\dmat^{T}\omega)^{*}&0\\0&0&X(\omega)^{*}A(\omega)\end{bmatrix},$$ and $G(\omega),$ $A(\omega)$ are as in the proof of the theorem. Since the columns of $A(\omega)$ are $\grp$-vectors, $X(\omega)^{*}A(\omega)=[(a_{m})_{\nu}(\dmat^{T}\omega)]_{\nu\in\Gamma,1\leq m\leq M},$ where $a_{m}$ is the rational trigonometric polynomial generating the $m$th column of $A(\omega)$.

If instead there is a square root for $a(\omega)/S(\omega),$ so that $a(\omega)/S(\omega)=|g(\omega)|^{2}$ for all $\omega\in\TT^{n}$, then we might write the factorization of Equation~(\ref{eq:slpfact}) with $$B(\dmat^{T}\omega)=\begin{bmatrix}g(\dmat^{T}\omega)&0\\0&X(\omega)^{*}A(\omega)\end{bmatrix}.$$ This corresponds to modifying the lowpass mask to obtain $g(\dmat^{T}\cdot)\tau$.

A third possibility combines these two ideas, requiring a representation for $a(\omega)$ as $S(\omega)|g_{0}(\omega)|^{2}+S(\omega)^{2}\sum_{j=1}^{J}|g_{j}(\omega)|^{2}$, where $g_{0}(0)=1$. Then we might write the factorization of Equation~(\ref{eq:slpfact}) with $$B(\dmat^{T}\omega)=\begin{bmatrix} g_0(\dmat^{T}\omega)&S(\dmat^{T}\omega)G(\dmat^{T}\omega)^{*}&0\\0&0&X(\omega)^{*}A(\omega)\end{bmatrix},$$ obtaining a modified lowpass mask $g_{0}(\dmat^{T}\cdot)\tau$, as well as new highpass masks corresponding to the generators $g_{1},\ldots,g_{J}$.

As such, depending on the kinds of sors representations available for $a(\omega)$, and the criteria of the tight wavelet filter bank designer (such as whether or not the lowpass mask should be modified), some of these constructions may be preferable to others. Further investigation of possible factorizations of this scaling matrix may also lead to entirely new constructions.

\section{Examples}
\label{S:boxspl}

In this section, we consider the case of lowpass masks associated with box spline refinable functions. In particular, given a matrix $\Xi\in M_{n,d'}(\ZZ)$, the associated box spline refinable function is defined by its Fourier transform as $$\hat{\phi}_{\Xi}(\omega)=\prod_{j=1}^{d}\left(\frac{1-e^{-i\omega\cdot\xi_{j}}}{i\omega\cdot\xi_{j}}\right)^{m_{j}},$$ where $\xi_{j}$ is repeated $m_{j}$ times as a column of $\Xi$ (so for all $j$, $m_{j}\in\ZZ$ and $m_{j}>0$), and $\sum_{j=1}^{d}m_{j}=d'$. The associated lowpass mask is then given by $\tau=\tau_{\Xi}$, defined by $$\tau_{\Xi}(\omega)=\prod_{j=1}^{d}\left(\frac{1+e^{-i\omega\cdot\xi_{j}}}{2}\right)^{m_{j}}.$$

In the following examples, we show how the flexibility of our construction and the oblique sub-QMF condition can be used to trade off between some of the various desiderata in a tight wavelet frame. First, we describe the existing approach to a certain box spline, which gives 40 wavelet masks with maximum vanishing moments.

\begin{exmp}
\label{ex:manymasks}
We consider the mask from Example 6.4 in \cite{LaiStock}, which has direction matrix $\Xi=\begin{bmatrix}1&0&1&0&1\\0&1&0&1&1\end{bmatrix}=[I\,I\,e],$ where $e$ denote the vector of all ones. This corresponds to the lowpass mask $$\tau(\omega)=\left(\frac{1+e^{-i\omega_{1}}}{2}\right)^{2}\left(\frac{1+e^{-i\omega_{2}}}{2}\right)^{2}\left(\frac{1+e^{-i(\omega_{1}+\omega_{2})}}{2}\right).$$ In \cite{LaiStock}, the choice of $S$ used is $1/[\hat{\phi},\hat{\phi}]$, which satisfies the oblique QMF condition with $\tau$, and hence their construction gives a number of wavelet masks equal to $4\times K$, where $K$ is the number of sors generators for $1/S$. In Example 6.4 of \cite{LaiStock}, they find an sors representation for this $S$ with $K=10$, giving 40 wavelet filters with the maximum possible vanishing moments, which is 3. This construction is summarized in the first column of Table~\ref{fig:compare}.

In the case that the oblique QMF condition holds, the construction of Theorem~\ref{th:osqmf} agrees with the one in \cite{LaiStock}. Then if we use the same function $S$, applying Corollary~\ref{c:trigsors}, we see that the number of highpass masks can be improved to $16$, since we have the theoretical bound $K\leq 2^{n}$ (where here $n=2$). However, these sum of squares generators may be properly rational trigonometric polynomials (i.e., with nontrivial denominator), rather than trigonometric polynomials. This construction is summarized in the second column of Table~\ref{fig:compare}.\hfill$\square$
\end{exmp}

\begin{table}
\begin{tabular}{l|c|c|c|c|c|}
&Example 6.4 in \cite{LaiStock} &With Corollary~\ref{c:trigsors}&Extension to dim. $n$&Example~\ref{ex:fewmasks}&Example~\ref{ex:fewmasksextend}\\\hline
\# of hp masks&40&16&$4^{n}$ (upper bound)&8&$2^{n+1}$\\\hline
\# of vms&3&3&3&2&2\\\hline
\end{tabular}
\caption{Number of highpass masks and vanishing moments for the box spline in two dimensions with direction set $\Xi=[I\,I\,e]$, considered in Examples~\ref{ex:manymasks}, \ref{ex:fewmasks}, and generalized to $n$ dimensions in Example~\ref{ex:fewmasksextend}. The construction referred to in the first column is reviewed in Example~\ref{ex:manymasks}, where we also discuss the improvement provided by applying Corollary~\ref{c:trigsors}. The extension to $n$ dimensions referred to in the third column is described in Example~\ref{ex:fewmasksextend}.}
\label{fig:compare}
\end{table}

Now we present a method for dramatically reducing the number of frame generators by choosing a vanishing moment recovery function for which $K=1$, while only reducing the number of vanishing moments in the wavelet system from 3 to 2.

\begin{exmp}
\label{ex:fewmasks}
We continue to consider the box spline with direction set $\Xi=[I\,I\,e]$, in the case that $n=2$. Now we use another $S$, given by $$S(\omega)=\left[\frac{1}{54}(2+\cos(\omega_{1}))(2+\cos(\omega_{2}))(5+\cos(\omega_{1}+\omega_{2}))\right]^{-1}.$$ Since $1/S$ is a product of nonnegative univariate trigonometric polynomials, using the Fej\'{e}r-Reisz Lemma, we find that $$\frac{1}{S(\omega)}=\left|\frac{1}{24\sqrt{6}}(1+\sqrt{3}+(-1+\sqrt{3})e^{i\omega_{1}})(1+\sqrt{3}+(-1+\sqrt{3})e^{i\omega_{2}})(2+\sqrt{6}+(-2+\sqrt{6})e^{i(\omega_{1}+\omega_{2})})\right|^{2},$$ so $K=1$. Computing $f(S,\tau;\omega)$, we obtain $$\sum_{\gamma\in\{0,\pi\}^{2}}\cos^{4}\left(\frac{\omega_{1}+\gamma_{1}}{2}\right)\left(\frac{2+\cos(\omega_{1}+\gamma_{1})}{3}\right)\cos^{4}\left(\frac{\omega_{2}+\gamma_{2}}{2}\right)\left(\frac{2+\cos(\omega_{2}+\gamma_{2})}{3}\right)\sin^{4}\left(\frac{\omega_{1}+\omega_{2}+\gamma_{1}+\gamma_{2}}{2}\right).$$ When $\gamma=0$, we see that $\sin^{4}((\omega_{1}+\omega_{2})/2)$ has $4$ vanishing moments, and when $\gamma\neq0$, some $\gamma_{j}=\pi$, in which case $\cos^{4}((\omega_{j}+\gamma_{j})/2)=\sin^{4}(\omega_{j}/2)$ has $4$ vanishing moments, so $f(S,\tau;\cdot)$ has 4 vanishing moments. Letting $$g(\omega)=\cos^{2}\left(\frac{\omega_{1}}{2}\right)\left(\frac{1+\sqrt{3}+(-1+\sqrt{3})e^{i\omega_{j}}}{2\sqrt{3}}\right)\cos^{2}\left(\frac{\omega_{2}}{2}\right)\left(\frac{1+\sqrt{3}+(-1+\sqrt{3})e^{i\omega_{2}}}{2\sqrt{3}}\right)\sin^{2}\left(\frac{\omega_{1}+\omega_{2}}{2}\right),$$ we see that $$f(S,\tau;\omega)=\sum_{\gamma\in\{0,\pi\}^{2}}|g(\omega+\gamma)|^{2}=\sum_{\nu\in\{0,1\}^{2}}|(g_{\nu})(2\omega)|^{2},$$ where $g_{\nu}$ are the polyphase components of $g$. This latter representation gives the desired $\grp$-invariant sos representation in Equation~(\ref{eq:oepfsors}) with $4$ generators. Then our construction gives 8 wavelet masks generating a tight wavelet frame for $L^{2}(\RR^{2})$, each of which have at least 2 vanishing moments by Proposition~\ref{prop:oephpvms}. This example is summarized in the fourth column of Table~\ref{fig:compare}. \hfill$\square$
\end{exmp}

The method of the previous example can be extended to any box spline, in any number of dimensions. This approach gives a simple $S$, such that $1/S$ has an sos representation with one generator, that still gives a lower bound on the number of vanishing moments for all of the wavelet masks which is at least the accuracy number of any separable factor of the lowpass mask. In particular, when $\Xi$ is of the form $[I\, I\,\cdots\, I\, e]$, where the identity matrix $I$ appears $k$ times, and $e$ is the vector of all ones, then this approach gives a lower bound of $k$ vanishing moments, out of the maximum possible $k+1$. Since $1/S$ has a square root, this construction gives at most $2^{n+1}$ highpass masks, whereas the upper bound on the number of highpass masks for $S=1/[\hat{\phi},\hat{\phi}]$ is $4^{n}$, though in specific cases, it may be possible to find fewer highpass masks which still satisfy the OEP conditions for this $S$ and $\tau$. In fact, the ideas in this section can be extended to the even more general case of box splines with prime dilation factor, but to simplify the exposition, we have restricted our attention to dyadic dilation after the next example.

\begin{exmp}
To give an idea of the procedure in the setting of prime dilation, we briefly consider the piecewise-quadratic box spline with dilation factor 3 in 2 dimensions. In this case, we have the lowpass mask $$\tau(\omega)=\left(\frac{1+2\cos(\omega_{1})}{3}\right)^{2}\left(\frac{1+2\cos(\omega_{2})}{3}\right)^{2}\left(\frac{1+2\cos(\omega_{1}+\omega_{2})}{3}\right),$$ which has accuracy number 3 and flatness number 1. If we choose $S(\omega)$ as in Example~\ref{ex:fewmasks}, then we get $f(S,\tau;\omega)$ equal to \begin{align*}\sum_{\gamma\in(2\pi/3)\{0,1,2\}^{2}}&\left(\frac{1+2\cos(\omega_{1}+\gamma_{1})}{3}\right)^{4}\left(\frac{2+\cos(\omega_{1}+\gamma_{1})}{3}\right)\left(\frac{1+2\cos(\omega_{2}+\gamma_{2})}{3}\right)^{4}\left(\frac{2+\cos(\omega_{2}+\gamma_{2})}{3}\right)\\&\times\frac{16}{27}\sin^{4}\left(\frac{\omega_{1}+\omega_{2}+\gamma_{1}+\gamma_{2}}{2}\right)(5+4\cos(\omega_{1}+\omega_{2}+\gamma_{1}+\gamma_{2})).\end{align*} Arguing as in Example~\ref{ex:fewmasks}, we can show that this has 4 vanishing moments, and has a $\grp$-invariant sos representation with 9 sos generators, but in this case, we would be better off applying Corollary~\ref{c:trigsors} to $f(S,\tau;\omega/3)$, which is a nonnegative trigonometric polynomial, to obtain an sors representation with 4 generators, which then give a $\grp$-invariant sors representation for $f(S,\tau;\omega)$ with 4 generators. This means that our construction from Theorem~\ref{th:osqmf} gives 13 highpass masks, all of which have at least 2 vanishing moments. In this case, if we were to use $S=1/[\hat{\phi},\hat{\phi}]$, the upper bound on the number of highpass masks would be 36, and it is unclear whether this number could be reduced in this particular case. However, all of these highpass masks will have the maximum 3 vanishing moments.

In the setting of an $n$-dimensional box spline refinable function with dilation factor $p$ prime, the upper bound on the number of highpass masks when using $S=1/[\hat{\phi},\hat{\phi}]$ is $(2p)^{n}$, but using $S$ which is a product of univariate functions gives the upper bound of $2^{n}+p^{n}$.\hfill$\square$
\end{exmp}

\begin{exmp}
\label{ex:generalconst}

Now we present our general method in the case of dyadic dilation. We will argue by induction, with the base case being a direction matrix $\Xi$ containing a basis of integer vectors and their repeats, and the induction step being the introduction of a new integer vector (and possibly repeats of this vector). Together, this gives a procedure for constructing the vanishing moment recovery function $S$ and finding an sos representation for $f(S,\tau;\cdot)$.

\emph{Base Case:} Suppose $\Xi\in M_{n,d'}(\ZZ)$ is such that $\Xi=[\Xi_{0},\Xi']$, where $\Xi_{0}$ is invertible mod $2$, and the columns of $\Xi'$ are just repeats of $\{\xi_{1},\ldots,\xi_{n}\}$, the columns of $\Xi_{0}$, where $\xi_{k}$ is repeated $m_{k}$ times as a column of $\Xi$. Let $\Xi_{0}^{-1}\in M_{n}(\ZZ)$ such that $\Xi_{0}^{-1}\Xi_{0}\equiv I\pmod{2 M_{n}(\ZZ)}.$ Let $\ell\geq\mu=\min\{m_{k}:1\leq k\leq n\}$, and define $$S(\omega)=\left(\prod_{i=1}^{n}s_{1,m_{i},\ell}(\omega\cdot \xi_{i})\right)^{-1},$$ where
\begin{equation}
\begin{split}
s_{1,m_{k},\ell}(\omega)&>0\text{ for all }1\leq k\leq n,\label{eq:skconds1}\\
s_{1,m_{k},\ell}(2\omega)-\sum_{\gamma\in\{0,\pi\}}\cos^{2m_{k}}((\omega+\gamma)/2)s_{1,m_{k},\ell}(\omega+\gamma)&\geq0\text{ for }1\leq k\leq n,\text{ and }\\  
s_{1,m_{k},\ell}(2\omega)-\sum_{\gamma\in\{0,\pi\}}\cos^{2m_{k}}((\omega+\gamma)/2)s_{1,m_{k},\ell}(\omega+\gamma)&=O(|\omega|^{2\ell})\text{ for }\omega\approx0\text{ for }1\leq k\leq n.
\end{split}
\end{equation}

\begin{table}
\centering
\begin{tabular}{c|c|c|c|c|}
&$m=1$&$m=2$&$m=3$&$m=4$\\ \hline
$\ell=2$ & 1 & $2,1$&$33,26,1$&$29,28,3$\\ \hline
$\ell=3$ & 1 & $2,1$ &$33,26,1$&$29,28,3$\\ \hline
$\ell=4$ & 1 & $2,1$ &$33,26,1$&$1208,1191,120,1$\\ \hline
\end{tabular}
\caption{Coefficients for the trigonometric polynomials $s_{1,m,\ell}$ in Example~\ref{ex:generalconst}. For many pairs of $(m,\ell)$ here, the actual order of vanishing at $\omega=0$ exceeds the given $2\ell$. We list the coefficients without normalization, in the order $1,\cos(\omega),\cos(2\omega),\ldots$, so for example, $s_{1,2,2}(\omega)=(2+\cos(\omega))/3.$ The normalization is always chosen so that $s_{1,m,\ell}(0)=1$.}
\label{fig:trigpolysa1}
\end{table}

In Table~\ref{fig:trigpolysa1}, we give the coefficients for trigonometric polynomials satisfying all of these conditions for some values of $m_{k}$ and $\ell$. Now if we let $t_{k}=\sum_{\gamma_{k}\in\{0,\pi\}}\cos^{2m_{k}}((\omega\cdot\xi_{k}+\gamma_{k})/2)s_{1,m_{k},\ell}(\omega\cdot\xi_{k}+\gamma_{k})$ for all $1\leq k\leq n$, we see that \begin{align*}
\frac{1}{S(2\omega)}&-\sum_{\gamma\in\{0,\pi\}^{n}}\frac{|\tau(\omega+\gamma)|^{2}}{S(\omega+\gamma)}=\frac{1}{S(2\omega)}-\sum_{\gamma\in\{0,\pi\}^{n}}\frac{|\tau(\omega+\Xi_{0}^{-T}\gamma)|^{2}}{S(\omega+\Xi_{0}^{-T}\gamma)}\\
&=\prod_{k=1}^{n}s_{1,m_{k},\ell}(2\omega\cdot\xi_{k})-\sum_{\gamma\in\{0,\pi\}^{n}}\prod_{j=1}^{n}\cos^{2m_{j}}((\omega+\Xi_{0}^{-T}\gamma)\cdot\xi_{j}/2)s_{1,m_{j},\ell}((\omega+\Xi_{0}^{-T}\gamma)\cdot\xi_{j})\\
&=\prod_{k=1}^{n}s_{1,m_{k},\ell}(2\omega\cdot\xi_{k})-\prod_{j=1}^{n}t_{j}\\
&=\sum_{k=1}^{n}\left(\prod_{j=k+1}^{n}s_{1,m_{j},\ell}(2\omega\cdot\xi_{j})\right)\left(\prod_{\ell=1}^{k-1}t_{\ell}\right)(s_{1,m_{k},\ell}(2\omega\cdot\xi_{k})-t_{k}).
\end{align*}
Each term in this sum is a product of nonnegative, $\pi$-periodic, univariate factors, which by the Fej\'{e}r-Riesz Lemma means that $f(S,\tau;\cdot)$ has a sum of squares representation with $n$ $\grp$-invariant squares. Moreover, by the last property of Equation~(\ref{eq:skconds1}), looking at the last factor of the terms in this sum, $f(S,\tau;\cdot)$ is $O(|\omega|^{2\ell})$ for $\omega\approx0$.

\emph{Induction Step:} Now suppose that $\Xi=[\Xi_{1},\xi,\xi,\ldots,\xi]$, where $\xi$ is repeated $m$ times, and is not a column of $\Xi_{1}$. Then $\tau(\omega)=\tau_{1}(\omega)(2^{-1}(1+e^{-i\omega\cdot\xi}))^{m}$, and we let $S(\omega)=S_{1}(\omega)s_{2,m,\ell}(\omega\cdot\xi)^{-1}.$ By the induction hypothesis, $S_{1}$ is such that $f(S_{1},\tau_{1};\cdot)\geq0$ and $O(|\omega|^{2\mu})$ for $\omega\approx0$, and $S_{1}^{-1}$ has a square root. Recall from the base case that $\mu$ is the minimum of the multiplicities of the first $n$ columns of $\Xi_{1}$, which form a basis for $\ZZ^{n}$ mod $2\ZZ^{n}$. We choose $\ell\geq\mu$, and $s_{2,m,\ell}$ satisfying the following conditions:
\begin{equation}
\begin{split}
s_{2,m,\ell}(\omega)&>0,\label{eq:skconds2}\\
s_{2,m,\ell}(2\omega)-\cos^{2m}(\omega/2)s_{2,m,\ell}(\omega)&\geq0,\text{ and}\\
s_{2,m,\ell}(2\omega)-\cos^{2m}(\omega/2)s_{2,m,\ell}(\omega)&=O(|\omega|^{2\ell})\text{ for }\omega\approx0.\end{split}
\end{equation}

\begin{table}
\centering
\begin{tabular}{c|c|c|c|c|}
&$m=1$&$m=2$&$m=3$&$m=4$\\ \hline
$\ell=2$ & $5,1$ & $2,1$ &$33,26,1$&$29,28,3$\\ \hline
$\ell=3$ & $97,24,-1$ & $237,124,-1$&$33,26,1$&$29,28,3$ \\ \hline
$\ell=4$ & $24134,6513,-438,31$ & $4927,267,-51,5$ &$8306,6567,246,1$&$1208,1191,120,1$\\ \hline
\end{tabular}

\caption{Coefficients for the trigonometric polynomials $s_{2,m,\ell}$ in Example~\ref{ex:generalconst}. For many pairs of $(m,\ell)$ here, the actual order of vanishing at $\omega=0$ exceeds the given $2\ell$. We list the coefficients without normalization, in the order $1,\cos(\omega),\cos(2\omega),\ldots$, so for example, $s_{2,1,2}(\omega)=(5+\cos(\omega))/6$. The normalization is always chosen so that $s_{2,m,\ell}(0)=1$.}
\label{fig:trigpolysa2}
\end{table}

In Table~\ref{fig:trigpolysa2}, we give the coefficients for trigonometric polynomials satisfying all of these conditions for some values of $m$ and $\ell$. Then
\begin{align*}
f(S,\tau;\omega)&=\frac{s_{2,m,\ell}(2\omega\cdot\xi)}{S_{1}(2\omega)}-\sum_{\gamma\in\{0,\pi\}^{n}}\frac{|\tau_{1}(\omega+\gamma)|^{2}}{S_{1}(\omega+\gamma)}\cos^{2m}((\omega+\gamma)\cdot\xi/2)s_{2,m,\ell}((\omega+\gamma)\cdot\xi)\\
&=f(S_{1},\tau_{1};\omega)s_{2,m,\ell}(2\omega\cdot\xi)\\&\quad+\sum_{\gamma\in\{0,\pi\}^{n}}\frac{|\tau_{1}(\omega+\gamma)|^{2}}{S_{1}(\omega+\gamma)}\left(s_{2,m,\ell}(2\omega\cdot\xi)-\cos^{2m}((\omega+\gamma)\cdot\xi/2)s_{2,m,\ell}((\omega+\gamma)\cdot\xi)\right),
\end{align*}
so by the Fej\'{e}r-Riesz Lemma, this has a sum of squares representation with $\#\{\text{sos generators for }f(S_{1},\tau;\cdot)\}+2^{n}$ sos generators. From the last property in Equation~(\ref{eq:skconds2}), when $\gamma=0$, the corresponding term in the sum above is $O(|\omega|^{2\ell})\leq O(|\omega|^{2\mu})$. When $\gamma\neq0$, $|\tau_{1}(\omega+\gamma)|^{2}$ has factors $\cos^{2\mu}((\omega+\gamma)\cdot\xi_{i}/2)$ for $\{\xi_{1},\ldots,\xi_{n}\}$ making a basis for $\ZZ^{n}$ mod $2\ZZ^{n}$, so some $\gamma\cdot\xi_{i}\equiv \pi\pmod{2\pi}$, which gives $\cos^{2\mu}((\omega+\gamma)\cdot\xi_{i}/2)=\sin^{2\mu}(\omega\cdot\xi_{i}/2)$, and this term has at least $2\mu$ vanishing moments. Since $f(S_{1},\tau_{1};\omega)=O(|\omega|^{2\mu})$ by the induction hypothesis, we see that $f(S,\tau;\cdot)$ has at least $2\mu$ vanishing moments. Then the construction of Theorem~\ref{th:osqmf} gives $\#\{\text{sos generators for }f(S_{1},\tau_{1};\cdot)\}+2^{n+1}$ highpass masks (noting that $1/S=|g|^{2}$ for a rational trigonometric polynomial $g$), which by Proposition~\ref{prop:oephpvms} have at least $\mu$ vanishing moments, since the accuracy number of $\tau$ is at least $\mu$.

By induction, we see that if $\Xi$ has $d$ distinct columns, where the first $n$ form a basis for $\ZZ^{n}$ mod 2, and the minimum of the multiplicities of these first $n$ columns is $\mu$, then we can find $n+2^{n}(d-n+1)$ highpass masks using the method of Theorem~\ref{th:osqmf}, and these will all have at least $\mu$ vanishing moments. \hfill$\square$
\end{exmp}

In \cite{AlgPersII}, a construction based on the UEP obtained the same number of highpass masks generating a tight wavelet frame with any box spline refinable function, but their construction always has some masks having only one vanishing moment, whereas ours typically gives more. We now extend Example~\ref{ex:fewmasks} to the case of arbitrary dimension.

\begin{exmp}
\label{ex:fewmasksextend}
In Example~\ref{ex:fewmasks}, we found $s_{1,2,2}(\omega)=\frac{1}{3}(2+\cos(\omega))$ satisfying $s_{1,2,2}(2\omega)-\cos^{4}(\omega/2)s_{1,2,2}(\omega)-\sin^{4}(\omega/2)s_{1,2,2}(\omega+\pi)=0$, which is why we only have $2^{2}(3-2+1)=8$ highpass masks all having at least $\mu=2$ vanishing moments. In $n$ dimensions, for the box spline with direction matrix $[I\, I\, e]$, where $e$ is the vector of all ones, the natural extension of $S$ from Example~\ref{ex:fewmasks} is $$S(\omega)=\left[\prod_{k=1}^{n}\left(\frac{2+\cos(\omega_{k})}{3}\right)\left(\frac{5+\cos(\omega\cdot e)}{6}\right)\right]^{-1},$$ and this gives us $2^{n+1}$ highpass masks all having 2 vanishing moments. If we were to use $1/[\hat{\phi},\hat{\phi}]$ instead, it is unclear whether it is possible to use fewer highpass masks than the upper bound $4^{n}$, though these will all have the maximum 3 vanishing moments. This example is summarized in the third and last columns of Table~\ref{fig:compare}. \hfill$\square$
\end{exmp}


\end{document}